\date{}
\DeclareMathOperator{\tr}{tr}
\newtheorem{theorem}{Theorem}
\newtheorem{proposition}{Proposition}
\newtheorem{result}{Result}
\newtheorem{lemma}{Lemma}
\theoremstyle{definition}
\newtheorem{definition}{Definition}
\newtheorem{remark}{Remark}
\begin{document}
\setlength{\baselineskip}{18pt}

\title{Relationships among quasivarieties induced by the min networks on inverse semigroups}
\author{Ying-Ying Feng $^{1,3}$}
\email[Ying-Ying Feng]{rickyfungyy@fosu.edu.cn, yingying.feng@york.ac.uk}
\author{Li-Min Wang $^2$}
\author{Zhi-Yong Zhou $^{2,4}$}
\address{$^1$ Department of Mathematics, Foshan University, Foshan 528000, P. R. China}
\address{$^2$ School of Mathematical Sciences, South China Normal University, Guangzhou 510631, P. R. China}
\address{$^3$ Department of Mathematics, University of York, York YO10 5DD, UK}
\address{$^4$ Guangzhou iFLY ZUNHONG Information Tech Co., Ltd., Guangzhou 510665, P. R. China}
\keywords{Inverse semigroup, min network of congruences, $\ker{\alpha_n}$-is-Clifford semigroup, $\beta_n$-is-over-$E$-unitary semigroup, relationship}
\subjclass[2010]{20M18}
\maketitle

\begin{abstract}
 A congruence on an inverse semigroup $S$ is determined uniquely by its kernel and trace. Denoting by $\rho_k$ and $\rho_t$ the least congruence on $S$ having the same kernel and the same trace as $\rho$, respectively, and denoting by $\omega$ the universal congruence on $S$, we consider the sequence $\omega$, $\omega_k$, $\omega_t$, $(\omega_k)_t$, $(\omega_t)_k$, $((\omega_k)_t)_k$, $((\omega_t)_k)_t$, $\cdots$. The quotients $\{S/\omega_k\}$, $\{S/\omega_t\}$, $\{S/(\omega_k)_t\}$, $\{S/(\omega_t)_k\}$, $\{S/((\omega_k)_t)_k\}$, $\{S/((\omega_t)_k)_t\}$, $\cdots$, as $S$ runs over all inverse semigroups, form quasivarieties. This article explores the relationships among these quasivarieties.
\end{abstract}

\section*{Introduction}

The study of congruences has always been one of the cornerstones of semigroup theory, and a major strand in this direction has been the description of the congruence lattices of specific semigroups or families of semigroups. Many classical results appear in monographs, see for example \cite{inverse, completely}. Recent work includes a study of congruence lattices of diagram monoids \cite{em2018, er2018}, nilsemigroups \cite{pj2017}, ample semigroups \cite{q2019} and graph inverse semigroups \cite{w2019}. Meanwhile, algebraists are digging deeper into the lattice of congruences of inverse semigroups, see, for example,   \cite{b2019, fw2019, wf2011}. One of the established treatments of congruences on inverse semigroups is the kernel--trace approach, which consists in splitting the analysis of a congruence $\rho$ on an inverse semigroup $S$ into two parts:  the \emph{kernel}, that is, the set of elements, $\rho$-related to idempotents, and the \emph{trace}, that is, the restriction of $\rho$ to the set of idempotents of $S$. From the kernel--trace decomposition of congruences, we obtain two operators, lower $k$ and lower $t$, on the congruence lattice of an inverse semigroup. We denote by $\rho_k$ the least congruence on $S$ having the same kernel as $\rho$, and by $\rho_t$ the least congruence having the same trace as $\rho$. On a fixed inverse semigroup $S$, starting with the universal congruence $\omega$, we form two sequences: 
\[ \omega,~\omega_k,~(\omega_k)_t,~((\omega_k)_t)_k,~\cdots \quad \text{and} \quad \omega,~\omega_t,~(\omega_t)_k,~((\omega_t)_k)_t,~\cdots,\] written more simply as 

\begin{equation}\label{eqn:sequence}\omega,~\omega_k,~\omega_{kt}, ~\omega_{ktk},~\cdots \quad \text{and} \quad \omega,~\omega_t,~\omega_{tk}, ~\omega_{tkt},~\cdots,\end{equation} and often denoted by 
\begin{equation}\label{eqn:sequence2}\alpha_0,~\beta_1,~\alpha_2,~\beta_3,~\cdots \quad \text{and} \quad \beta_0,~\alpha_1,~\beta_2,~\alpha_3,~\cdots;\end{equation}
having both labelling conventions transpires to be useful.
These congruences, known as the {\em min network} of congruences on $S$, together with the intersections $\alpha_1 \cap \beta_1$, $\alpha_2 \cap \beta_2$, $\alpha_3 \cap \beta_3$, $\cdots$, form a sublattice of the lattice of all congruences on $S$ \cite{network}. Petrich and Reilly \cite{network} first investigated properties of these congruences. For all but the initial elements of the sequences, the resulting quotient semigroups lie in particular quasivarieties and \cite{network} establishes a system of defining implications. Indeed, $\alpha_1=\sigma$, $\beta_1=\eta$, $\alpha_2=\nu$, $\beta_2=\pi$, $\beta_3=\lambda$ are, respectively, the least congruences $\rho$ on $S$ such that $S/\rho$ is a group, semilattice, Clifford semigroup, $E$-unitary inverse semigroup, and $E$-reflexive inverse semigroup.

Notice that there are repeated patterns in the quasivarieties to which $S/\alpha_2$, $S/\alpha_1$, $S/\beta_3$ and $S/\beta_2$ belong. A Clifford semigroup ($S/\alpha_2)$ is a semilattice of groups ($S/\alpha_1)$, and an $E$-reflexive inverse semigroup ($S/\beta_3)$ is a semilattice of $E$-unitary inverse semigroups ($S/\beta_2$). Another pattern  appears  in the quasivarieties to which $S/\beta_2$, $S/\beta_1$, $S/\alpha_3$ and $S/\alpha_2$ belong \cite{wf2011}. The closure of the set of idempotents, or $E\omega$,
of 
an $E$-unitary inverse semigroup ($S/\beta_2$)
is a semilattice ($S/\beta_1$), and $E\omega$ of $S/\alpha_3$ is  Clifford ($S/\alpha_2)$. Naturally,  we wonder whether these patterns continue indefinitely.

In the process of achieving this goal, we did not figure out the relationship from start to end of the sequence (\ref{eqn:sequence}) or one by one. Instead, a different technique is adopted. Notice that the classes of quotient semigroups by congruences in the min network are essentially characterised by congruence equations such as  $\omega_t=\varepsilon$, $\omega_{tk}=\varepsilon$, $\omega_k=\varepsilon$, $\omega_{kt}=\varepsilon$, and $\omega_{ktk}=\varepsilon$. For example, $S$ is a Clifford  [resp. $E$-reflexive inverse]  semigroup if and only if $\omega_{kt}=\varepsilon$ [resp. $\omega_{ktk}=\varepsilon$]. In this article we make a general investigation of the properties of congruences $\rho$ satisfying $\rho_{kt}=\varepsilon$, $\rho_{tk}=\varepsilon$, or $\rho_{tkt}=\varepsilon$, which helps us not only determine the quasivarieties to which the quotient semigroups belong, but also obtain relationships among the corresponding quasivarieties.

In Section 1 we summarise the notation and terminology necessary used in this paper. In Section 2 we provide expressions for minimal congruences on quotient semigroups by the min network of congruences on a regular semigroup. Our principal results are in Section 3, where we explicate the relationships among quasivarieties related to the min network on an inverse semigroup. In the process of proving these results, we investigate properties of congruences satisfying $\rho_{kt}=\varepsilon$, $\rho_{tk}=\varepsilon$, or $\rho_{tkt}=\varepsilon$.

\section{Preliminaries}

\emph{Throughout the entire paper, $S$ stands for a regular semigroup.}

We shall use the notation and terminology of Howie \cite{howie} and Petrich \cite{inverse}, to which the reader is referred for basic
information and results on inverse semigroups. For an arbitrary regular semigroup $S$, we denote by $E_S$ its set of idempotents, and by $V(x)$ the set of all inverses of an element $x\in S$. The unique inverse of an element $x$ of an inverse semigroup is denoted by $x^{-1}$. For every relation $\mathsf{R}$ on $S$ we denote by $\mathsf{R}^\infty$ the smallest transitive relation on $S$ containing $\mathsf{R}$, and by $\mathsf{R}^*$ the least congruence on $S$ containing $\mathsf{R}$. If $Z$ is a subsemigroup of  $S$, we denote by $\mathsf{R}|_Z$ the restriction of $\mathsf{R}$ to $Z$, and if $\mathsf{R}$ is a specific relation (such as, a Green's relation or the least group congruence), then we use the notation $\mathsf{R}_Z$ for the corresponding relation defined on  $Z$.

The centralizer of $E_S$ in $S$, $E_S\zeta$,  is defined by $$E_S\zeta=\{a \in S\,|\,ae=ea~\text{for all}~e \in E_S\}.$$ 
The closure of $E_S$ in $S$, $E_S\omega$,  is defined by $$E_S\omega=\{a \in S\,|\,a \ge e~\text{for some}~e \in E_S\}.$$ Here $\ge$ denotes the natural partial order on $S$, which is defined by the rule that $$a \le b \Leftrightarrow (\exists\,e,f \in E_S)~a=eb 
\mbox{ and }a=bf;$$
if $S$ is  inverse, then we need only one of these two conditions.  
The regular semigroup
$S$ is {\em Clifford}  if its idempotents lie in its centre; equivalently, by  a well-known structure theorem, $S$  is a (strong) semilattice of groups.  The semigroup $S$ is said to be \emph{$E$-unitary} if $ey=e$ for some $e \in E_S$ implies that $y \in E_S$. Equivalently, $S$ is $E$-unitary if and only if it satisfies the implication $xy=x \Rightarrow y^2=y$.

For any regular semigroup $S$, let $\mathcal{C}(S)$ be the lattice of all congruences on $S$. For $\rho \in \mathcal{C}(S)$, $\ker{\rho}=\{a \in S\,|\,a\, \rho\, e~ \text{for some}~e \in E_S\}$ is the \emph{kernel} of $\rho$, and $\tr{\rho}=\rho|_{_{E_S}}$ is the \emph{trace} of $\rho$. A congruence on a regular semigroup or an inverse semigroup is determined uniquely by its kernel and trace.

\begin{result}\textup{(\cite[Lemma 2.10]{pp1986})} Let $\rho$ be a congruence on a regular  semigroup $S$. Then $$a~\rho~b \iff a\,[\mathcal{L}(\tr{\rho})\mathcal{L}(\tr{\rho})\mathcal{L} \cap \mathcal{R}(\tr{\rho})\mathcal{R}(\tr{\rho})\mathcal{R}]\,b, ~ ab' \in \ker{\rho}~\text{for some [all] } b' \in V(b).$$
\end{result}

\begin{result}\textup{(\cite[Theorem 4.4]{congruence})}
 Let $\rho$ be a congruence on an inverse semigroup $S$. Then $$a~\rho~b \iff a^{-1}a\,\tr{\rho}\, b^{-1}b,~ab^{-1} \in \ker{\rho}.$$
\end{result}
Consequently, for $\rho, \theta \in \mathcal{C}(S)$, $$\rho \subseteq \theta \iff \tr{\rho} \subseteq \tr{\theta},~\ker{\rho} \subseteq \ker{\theta}.$$

For any $\rho \in \mathcal{C}(S)$, the relations $\mathcal{T}$ and $\mathcal{K}$ are defined as follows, $$\rho~\mathcal{T}~\theta
\iff \tr{\rho}=\tr{\theta}, \qquad \rho~\mathcal{K}~\theta \iff \ker{\rho}=\ker{\theta}.$$ The relation $\mathcal{T}$ is a complete congruence on the lattice $\mathcal{C}(S)$, while $\mathcal{K}$ is an equivalence relation on $\mathcal{C}(S)$ \cite{inverse}. The equivalence class $\rho \mathcal{T}$ [resp. $\rho \mathcal{K}$] is an interval of $\mathcal{C}(S)$ with greatest and least element to be denoted by $\rho^T$ [resp. $\rho^K$] and $\rho_t$ [resp. $\rho_k$], respectively.

\begin{result}\textup{(\cite[Theorem 3.2]{pp1986})}
 For any congruence $\rho$ on a regular semigroup $S$, $$\rho_t=(\tr{\rho})^*, \quad \rho_k=\{(x,x^2)|\,x \in \ker{\rho}\}^*.$$
\end{result}

There is an explicit expression for $\rho_t$ on inverse semigroups.

\begin{result}\textup{(\cite[Theorem \@Roman3.2.5]{inverse})}\label{rholt}
 For any congruence $\rho$ on an inverse semigroup $S$, $$a~\rho_t~b \iff (\exists\,e \in E_S)~ae=be \text{ and } e\,\rho\,a^{-1}a\,\rho\,b^{-1}b.$$
\end{result}

On any inverse semigroup $S$, the relation $\mathcal{F}$ is defined by $$a\, \mathcal{F}\, b \iff a^{-1}b \in E_S.$$

\begin{remark}\label{f}
 The relation $\mathcal{F}$ is reflexive, symmetric and compatible. First, $\mathcal{F}$ is reflexive, since $a^{-1}a \in E_S$ for every $a \in S$ and hence $a\,\mathcal{F}\,a$. Second, $\mathcal{F}$ is symmetric, since $b^{-1}a=(a^{-1}b)^{-1}\in E_S$ if $a\,\mathcal{F}\,b$ and thus $b\,\mathcal{F}\,a$. Finally, $\mathcal{F}$ is compatible, since 
 if $a\,\mathcal{F}\,b$  we have $(ac)^{-1}(bc)=c^{-1}(a^{-1}b)c \in E_S$ and $(ca)^{-1}(cb)=a^{-1}c^{-1}cb=(a^{-1}b)(b^{-1}c^{-1}cb) \in E_S$  and so $ac\,\mathcal{F}\,bc$ and $ca\,\mathcal{F}\,cb$.
\end{remark}

There are alternative expressions for the extremal congruences on inverse semigroups.

\begin{result}\label{tkgen} \textup{(\cite[Theorem 6.2]{network})}
 For any congruence $\rho$ on an inverse semigroup $S$, $$\rho_t=(\rho \cap \mathcal{F})^*, \quad \rho_k=(\rho \cap \mathcal{L})^*=(\rho \cap \mathcal{R})^*.$$
\end{result}

Let $\mathcal{P}$ be a class of semigroups and $\rho \in \mathcal{C}(S)$. Then $\rho$ is \emph{over $\mathcal{P}$} if each $\rho$-class which
is a subsemigroup of $S$ belongs to $\mathcal{P}$ and  $\rho$ is a \emph{$\mathcal{P}$-congruence} if $S/\rho \in \mathcal{P}$. For example, $\mathcal{J}$ is over groups on Clifford semigroups. A congruence $\rho$ on $S$ is \emph{idempotent separating} if $e,f\in E_S$  and $e\, \rho\, f$ imply that $e=f$. On the other hand, $\rho$ is \emph{idempotent pure} if $\ker{\rho}=E_S$. We denote by $\mu$ and $\tau$ the greatest idempotent separating and greatest idempotent pure congruences on $S$, respectively. The equality and the universal relations on $S$ are denoted by $\varepsilon$ and $\omega$ respectively.

The following result will frequently be useful.

\begin{result}\textup{(\cite[Theorem 1.5.4]{howie})}
 Let $\rho$, $\theta$ be congruences on an arbitrary semigroup $S$ such that $\rho \subseteq \theta$. Then $\theta/\rho=\{(x\rho, y\rho) \in S/\rho \times S/\rho\,|\,(x,y) \in \theta\}$ is a congruence on $S/\rho$, and $(S/\rho)/(\theta/\rho) \simeq S/\theta$. In particular, $\varepsilon_{S/\rho}=\rho/\rho$ and $\omega_{S/\rho}=\omega/\rho$.
\end{result}

Properties of congruences which are obtained by a recursive process based on the concepts of the kernel and the trace of a congruence were first studied by Petrich -- Reilly \cite{network}.

\begin{definition}\textup{(\cite[Definition 5.1]{network})}\label{not}
 On a regular semigroup $S$ we define inductively the following two sequences of congruences:
 \begin{eqnarray*}
  &&\alpha_0=\omega=\beta_0,\\
  &&\alpha_n=(\beta_{n-1})_t, \quad \beta_n=(\alpha_{n-1})_k \quad \text{for~} n \geqslant 1.
 \end{eqnarray*}
 We call the aggregate $\{\alpha_n, \beta_n\}_{n=0}^\infty$, together with the inclusion relation for congruences, the \emph{min network} of
 congruences on $S$.
\end{definition}

The min network of congruences on inverse semigroups is related to the following classes of semigroups.
\begin{definition}
 An inverse semigroup $S$ for which $\ker{\alpha_n}$ is a Clifford semigroup is called a \emph{$\ker{\alpha_n}$-is-Clifford} \emph{semigroup}. An inverse semigroup $S$ is called a \emph{$\beta_n$-is-over-$E$-unitary semigroup} if $e\beta_n$ is $E$-unitary for each $e \in E_S$.
\end{definition}

\begin{remark}
The class of $\ker{\alpha_n}$-is-Clifford semigroups forms a quasivariety. So does the class of $\beta_n$-is-over-$E$-unitary semigroups \cite{network}.
\end{remark}

The next result develops some basic facts about the min network.
\begin{result}\textup{(\cite[Theorem 2.9, Theorem 2.13]{fw2019})}
 Let $S$ be an inverse semigroup and let $n$ be a non-negative integer. Then
 \begin{enumerate}[label=(\arabic*)]
  \item $\alpha_{n+2}$ is the minimum congruence $\rho$ on $S$ such that $S/\rho$ is a $\ker{\alpha_n}$-is-Clifford semigroup;
  \item $\beta_{n+2}$ is the minimum congruence $\rho$ on $S$ such that $S/\rho$ is a $\beta_n$-is-over-$E$-unitary semigroup.
 \end{enumerate}
\end{result}

The min network is depicted in Figure \ref{fig} together with the types of semigroups to which the quotient semigroups belong.
\begin{figure}[!hbt]\label{fig}
 \renewcommand*\figurename{Figure}
 \renewcommand*\captionlabeldelim{}
 \setlength{\unitlength}{0.7cm}
 \begin{center}
  \begin{picture}(4,24)
   \drawline(1,3)(4,6)(0,10)(4,14)(0,18)(4,22)(2,24)(0,22)(4,18)(0,14)(4,10)(0,6)(4,2)(3,1)
   \allinethickness{0.7mm}
   \put(0,6){\circle*{0.06}} \put(0,10){\circle*{0.06}} \put(0,14){\circle*{0.06}} \put(0,18){\circle*{0.06}} \put(0,22){\circle*{0.06}}
   \put(2,24){\circle*{0.06}} \put(2,20){\circle*{0.06}} \put(2,16){\circle*{0.06}} \put(2,12){\circle*{0.06}} \put(2,8){\circle*{0.06}}
   \put(2,4){\circle*{0.06}} \put(4,2){\circle*{0.06}} \put(4,6){\circle*{0.06}} \put(4,10){\circle*{0.06}} \put(4,14){\circle*{0.06}}
   \put(4,18){\circle*{0.06}} \put(4,22){\circle*{0.06}}
   \put(2,24.3){\makebox(0,0)[b]{$\omega$}}
   \put(-0.3,22){\makebox(0,0)[r]{$\sigma=\alpha_1$}}
   \put(-0.3,18){\makebox(0,0)[r]{$\eta_t=\nu=\alpha_2$}}
   \put(-0.3,14){\makebox(0,0)[r]{$\pi_t=\alpha_3$}}
   \put(-0.3,10){\makebox(0,0)[r]{$\lambda_t=\alpha_4$}}
   \put(-0.3,6){\makebox(0,0)[r]{$(\beta_4)_t=\alpha_5$}}
   \put(4.3,22){\makebox(0,0)[l]{$\beta_1=\eta$}}
   \put(4.3,18){\makebox(0,0)[l]{$\beta_2=\pi=\sigma_k$}}
   \put(4.3,14){\makebox(0,0)[l]{$\beta_3=\lambda=\nu_k$}}
   \put(4.3,10){\makebox(0,0)[l]{$\beta_4=\pi_{tk}$}}
   \put(4.3,6){\makebox(0,0)[l]{$\beta_5=\lambda_{tk}$}}
   \put(4.3,2){\makebox(0,0)[l]{$\beta_6=(\alpha_5)_k$}}
   \put(-0.3,21.2){\makebox(0,0)[r]{\footnotesize{\textsf{group}}}}
   \put(-0.3,17.2){\makebox(0,0)[r]{\footnotesize{\textsf{Clifford}}}}
   \put(-0.3,13.2){\makebox(0,0)[r]{\footnotesize{$\ker{\sigma}$-\textsf{is-Clifford}}}}
   \put(-0.3,9.2){\makebox(0,0)[r]{\footnotesize{$\ker{\nu}$-\textsf{is-Clifford}}}}
   \put(-0.3,5.2){\makebox(0,0)[r]{\footnotesize{$\ker{\alpha_3}$-\textsf{is-Clifford}}}}
   \put(4.3,21.2){\makebox(0,0)[l]{\footnotesize{\textsf{semilattice}}}}
   \put(4.3,17.2){\makebox(0,0)[l]{\footnotesize{$E$-\textsf{unitary}}}}
   \put(4.3,13.2){\makebox(0,0)[l]{\footnotesize{$E$-\textsf{reflexive}}}}
   \put(4.3,9.2){\makebox(0,0)[l]{\footnotesize{$\pi$-\textsf{is-over-$E$-unitary}}}}
   \put(4.3,5.2){\makebox(0,0)[l]{\footnotesize{$\lambda$-\textsf{is-over-$E$-unitary}}}}
   \put(4.3,1.2){\makebox(0,0)[l]{\footnotesize{$\beta_4$-\textsf{is-over-$E$-unitary}}}}
   \put(2,0.5){\makebox(0,0)[c]{$\vdots$}}
  \end{picture}
  \caption{\quad Min network of congruences on inverse semigroups} \label{origin1}
 \end{center}
\end{figure}
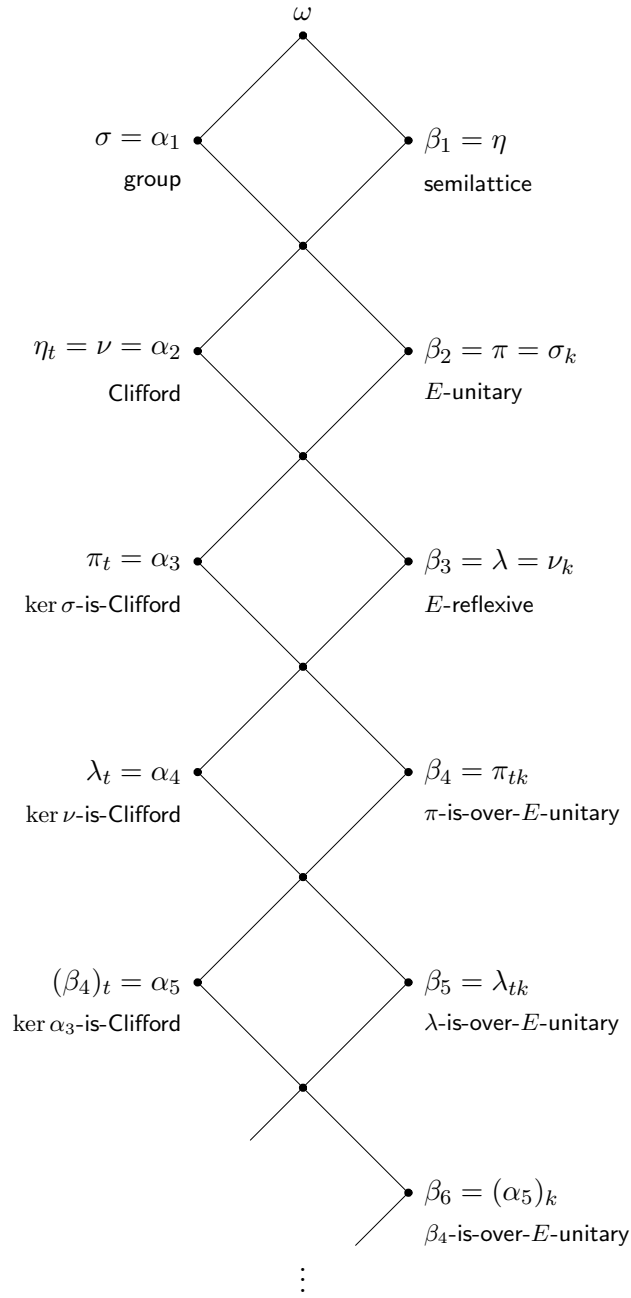

\section{Minimal congruences on quotient semigroups}

After formulating the least congruences having the same trace or kernel as a given congruence on the quotient of a regular semigroup, we provide expressions for the min network of congruences on quotient semigroups corresponding to the min network of congruences on a regular semigroup. We then characterise congruences $\rho$ with the property $\rho_t=\varepsilon$ and $\rho_k=\varepsilon$.

We shall need some auxiliary results first.

\begin{lemma}\label{least}
 Let $\rho$, $\theta$ be congruences on a regular semigroup $S$. Then
 \begin{enumerate}[label=(\arabic*)]
  \item $\tr{\rho} \subseteq \tr{\theta} \iff \rho_t \subseteq \theta$;
  \item $\ker{\rho} \subseteq \ker{\theta} \iff \rho_k \subseteq \theta$.
 \end{enumerate}
\end{lemma}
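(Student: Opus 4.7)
The plan is to prove both parts directly from the explicit generators of $\rho_t$ and $\rho_k$ provided by Result 3, combined with the defining property of $\rho_t$ and $\rho_k$ as least elements of their $\mathcal{T}$-class and $\mathcal{K}$-class respectively. Both implications are short, so the main work lies in choosing the right characterisation of the generators.

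For part (1), I would argue as follows. For the forward implication, assume $\tr \rho \subseteq \tr \theta$. Since $\tr \theta \subseteq \theta$, we have $\tr \rho \subseteq \theta$, so $\theta$ is a congruence on $S$ containing the relation $\tr \rho$. By Result 3, $\rho_t=(\tr \rho)^*$ is the least such congruence, hence $\rho_t \subseteq \theta$. For the reverse implication, assume $\rho_t \subseteq \theta$. Restricting to $E_S$ gives $\tr \rho_t \subseteq \tr \theta$, and since $\rho_t$ is by definition the least congruence in the $\mathcal{T}$-class of $\rho$, we have $\tr \rho_t = \tr \rho$, so $\tr \rho \subseteq \tr \theta$.

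For part (2), the argument is parallel but uses the generating set $X=\{(x,x^2)\mid x\in\ker\rho\}$. For the forward implication, assume $\ker\rho\subseteq\ker\theta$. Then for each $x\in\ker\rho$ there exists $e\in E_S$ with $x\,\theta\,e$, which gives $x^2\,\theta\,e^2=e\,\theta\,x$, so $X\subseteq\theta$. Since $\theta$ is a congruence and $\rho_k=X^*$ is the least congruence containing $X$ by Result 3, we conclude $\rho_k\subseteq\theta$. For the reverse implication, $\rho_k\subseteq\theta$ gives $\ker\rho_k\subseteq\ker\theta$, and by the definition of $\rho_k$ as the least element of $\rho\mathcal{K}$ we have $\ker\rho_k=\ker\rho$, yielding $\ker\rho\subseteq\ker\theta$.

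There is really no significant obstacle here. The only points worth being explicit about are the two equalities $\tr\rho_t=\tr\rho$ and $\ker\rho_k=\ker\rho$, which are immediate from the definitions of $\rho_t$ and $\rho_k$ as the smallest representatives of their $\mathcal{T}$- and $\mathcal{K}$-classes; everything else is a formal consequence of Result 3 together with the facts that $\tr\theta\subseteq\theta$ and that a congruence contains a generating set iff it contains the congruence it generates.
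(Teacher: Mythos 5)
Your proposal is correct and follows essentially the same route as the paper: both directions of each part are derived from the explicit generating sets $\tr\rho$ and $\{(x,x^2)\mid x\in\ker\rho\}$ given in Result 3, together with the equalities $\tr\rho_t=\tr\rho$ and $\ker\rho_k=\ker\rho$. The only cosmetic difference is that in part (2) the paper first notes $\{(x,x^2)\mid x\in\ker\theta\}\subseteq\theta$ and then uses the inclusion of kernels, whereas you verify $(x,x^2)\in\theta$ directly for $x\in\ker\rho$; the computation is identical.
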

\begin{proof}
 (1) If $\tr{\rho} \subseteq \tr{\theta}$, then $\tr{\rho} \subseteq \theta$ and $\rho_t=(\tr{\rho})^* \subseteq \theta$. Conversely, suppose that $\rho_t \subseteq \theta$. Then $\tr{\rho}=\tr{\rho_t} \subseteq \tr{\theta}$.

 (2) If $\ker{\rho} \subseteq \ker{\theta}$, then $$\{(x,x^2)\,|\,x \in \ker{\rho}\} \subseteq \{(x,x^2)\,|\,x \in \ker{\theta}\} \subseteq \theta,$$ and so $\rho_k=\{(x,x^2)\,|\,x \in \ker{\rho}\}^* \subseteq \theta$. Conversely, if $\rho_k \subseteq \theta$, then $\ker{\rho}=\ker{\rho_k} \subseteq \ker{\theta}$.
\end{proof}

\begin{lemma}\label{quotient}
 Let $\rho$, $\gamma$, $\theta$ be congruences on a regular semigroup $S$ such that $\rho \subseteq \gamma$ and $\rho \subseteq \theta$. Then
 \begin{enumerate}[label=(\arabic*)]
  \item $\tr{\gamma/\rho}=\tr{\theta/\rho} \iff \tr{\gamma}=\tr{\theta}$;
  \item $\ker{\gamma/\rho}=\ker{\theta/\rho} \iff \ker{\gamma}=\ker{\theta}$.
 \end{enumerate}
 In particular, if $\rho \subseteq \theta$ and $\tr{\rho}=\tr{\theta}$, then $\tr{\theta/\rho}=\tr{\varepsilon_{S/\rho}}$; if $\rho \subseteq \theta$ and $\ker{\rho}=\ker{\theta}$, then $\ker{\theta/\rho}=\ker{\varepsilon_{S/\rho}}$.
\end{lemma}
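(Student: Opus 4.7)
The plan is to first pin down the idempotents, trace, and kernel of the quotient congruences, then verify both equivalences by chasing elements through $\rho$ and using the hypothesis $\rho \subseteq \gamma,\theta$. The particular case then falls out by taking $\gamma = \rho$ and invoking $\varepsilon_{S/\rho} = \rho/\rho$ from the earlier quotient-correspondence result.

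First I would identify the idempotents of $S/\rho$. An element $x\rho$ is idempotent precisely when $x^2 \,\rho\, x$, which is equivalent to $x \in \ker\rho$; so $E_{S/\rho} = \{x\rho : x \in \ker\rho\}$. With this in hand, one checks directly that
\[
\tr(\gamma/\rho) = \{(x\rho, y\rho) : x,y \in \ker\rho,\ x\,\gamma\,y\},
\]
and, using $\rho \subseteq \gamma$ to replace any idempotent witness in $S/\rho$ by one back in $S$, that $\ker(\gamma/\rho) = \{x\rho : x \in \ker\gamma\}$, and analogously for $\theta$.

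For (1), the $(\Leftarrow)$ direction is the substantive one. Given $\tr\gamma = \tr\theta$ and $x,y \in \ker\rho$ with $x\,\gamma\,y$, I would pick idempotents $e,f \in E_S$ with $x\,\rho\,e$ and $y\,\rho\,f$; since $\rho \subseteq \gamma,\theta$ the relation $x\,\gamma\,y$ forces $e\,\gamma\,f$, whence $e\,\theta\,f$ by the hypothesis, and then $x\,\theta\,e\,\theta\,f\,\theta\,y$. The $(\Rightarrow)$ direction is immediate: idempotents lie in $\ker\rho$, so any $\tr\gamma$-pair of idempotents is already visible in $\tr(\gamma/\rho)$. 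For (2), the $(\Leftarrow)$ direction is trivial from the kernel description above. For $(\Rightarrow)$: if $x \in \ker\gamma$, then $x\rho \in \ker(\gamma/\rho) = \ker(\theta/\rho)$, so $x\rho = y\rho$ for some $y \in \ker\theta$; using $\rho \subseteq \theta$ this gives $x\,\theta\,y$ and then $x \in \ker\theta$, and symmetrically.

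Finally, for the ``in particular'' clauses, take $\gamma = \rho$ in parts (1) and (2). Since $\rho \subseteq \rho$ trivially and $\varepsilon_{S/\rho} = \rho/\rho$, the hypotheses $\tr\rho = \tr\theta$ and $\ker\rho = \ker\theta$ yield $\tr(\theta/\rho) = \tr(\rho/\rho) = \tr\varepsilon_{S/\rho}$ and $\ker(\theta/\rho) = \ker\varepsilon_{S/\rho}$, respectively. The only point requiring any care is the characterisation of $E_{S/\rho}$ and the $(\Leftarrow)$ direction of (1), where one must remember to replace elements of $\ker\rho$ by their idempotent $\rho$-partners before appealing to the trace hypothesis; the rest is routine bookkeeping.
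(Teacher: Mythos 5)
Your proof is correct and follows essentially the same route as the paper's: both arguments chase elements through the correspondence $\theta \mapsto \theta/\rho$, using $\rho \subseteq \gamma$ and $\rho \subseteq \theta$ to pass between $S$ and $S/\rho$, and both obtain the ``in particular'' clause by specialising part (1) and part (2) to $\gamma = \rho$ together with $\varepsilon_{S/\rho} = \rho/\rho$. The only place you are more explicit than the paper is the identification $E_{S/\rho} = \{x\rho \,|\, x \in \ker{\rho}\} = \{e\rho \,|\, e \in E_S\}$, which rests on Lallement's lemma for regular semigroups; the paper uses the same fact silently by only ever considering idempotents of $S/\rho$ of the form $e\rho$ with $e \in E_S$.
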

\begin{proof}
 (1) Assume that $\tr{\gamma/\rho}=\tr{\theta/\rho}$, and let $e$, $f \in E_S$ be $\gamma$-related. Then $e\rho\,(\gamma/\rho)\,f\rho$ and thus $e\rho\,(\theta/\rho)\,f\rho$, since $\tr{\gamma/\rho}=\tr{\theta/\rho}$. Therefore, $e\,\theta\,f$ whence $\tr{\gamma} \subseteq \tr{\theta}$. In a similar way we may find that $\tr{\theta} \subseteq \tr{\gamma}$ and hence $\tr{\gamma}=\tr{\theta}$.

 Conversely, we suppose that $\tr{\gamma}=\tr{\theta}$ and let $e$, $f \in E_S$ be such that $e\rho\,(\gamma/\rho)\,f\rho$. Then $e\,\gamma\,f$ and so by $\tr{\gamma}=\tr{\theta}$ we get first that $e\,\theta\,f$ and then that $e\rho\,(\theta/\rho)\,f\rho$, from which it follows that $\tr{\gamma/\rho} \subseteq \tr{\theta/\rho}$. In the same way we can show that $\tr{\theta/\rho} \subseteq \tr{\gamma/\rho}$ and thus $\tr{\gamma/\rho}=\tr{\theta/\rho}$.

 (2) Assume that $\ker{\gamma/\rho}=\ker{\theta/\rho}$. If $a \in \ker{\gamma}$, then there exists $e \in E_S$ such that $a\,\gamma\,e$. Thus $a\rho\,(\gamma/\rho)\,e\rho$ and $a\rho \in \ker{\gamma/\rho}=\ker{\theta/\rho}$, which implies that there exists $f \in E_S$ such that $a\rho\,(\theta/\rho)\,f\rho$. Hence $a\,\theta\,f$ and $a \in \ker{\theta}$, whence $\ker{\gamma} \subseteq \ker{\theta}$. By symmetry we may conclude that $\ker{\theta} \subseteq \ker{\gamma}$ and so $\ker{\gamma}=\ker{\theta}$.

 Conversely, suppose that $\ker{\gamma}=\ker{\theta}$. If $a\rho \in \ker{\gamma/\rho}$, then there exists $e \in E_S$ such that $a\rho\,(\gamma/\rho)\,e\rho$. Thus $a\,\gamma\,e$ whence it also follows that $a \in \ker{\gamma}=\ker{\theta}$. Now there exists $f \in E_S$ such that $a\,\theta\,f$. We have $a\rho\,(\theta/\rho)\,f\rho$ so that $a\rho \in \ker{\theta/\rho}$. Therefore $\ker{\gamma/\rho} \subseteq \ker{\theta/\rho}$. A similar argument establishes that $\ker{\theta/\rho} \subseteq \ker{\gamma/\rho}$ and so $\ker{\gamma/\rho}=\ker{\theta/\rho}$.

 In particular, if $\rho \subseteq \theta$ and $\tr{\rho}=\tr{\theta}$, then $\tr{\theta/\rho}=\tr{\rho/\rho}=\tr{\varepsilon_{S/\rho}}$; if $\rho \subseteq \theta$ and $\ker{\rho}=\ker{\theta}$, then $\ker{\theta/\rho}=\ker{\rho/\rho}=\ker{\varepsilon_{S/\rho}}$.
\end{proof}

The next result gives some information about the behaviour of the lower end of the trace and kernel classes concerning quotients.

\begin{proposition}\label{min}
 Let $\rho$, $\theta$ be congruences on a regular semigroup $S$ such that $\rho \subseteq \theta$. Then $$(\theta/\rho)_t=(\rho \vee \theta_t)/\rho, \qquad (\theta/\rho)_k=(\rho \vee \theta_k)/\rho.$$ In particular, if $\rho \subseteq \theta_t$, then $(\theta/\rho)_t=\theta_t/\rho$; if $\rho \subseteq \theta_k$, then $(\theta/\rho)_k=\theta_k/\rho$.
\end{proposition}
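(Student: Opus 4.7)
The plan is to establish the trace identity $(\theta/\rho)_t = (\rho \vee \theta_t)/\rho$ directly, by verifying the defining property of the ``lower $t$'' operator: show that $(\rho \vee \theta_t)/\rho$ is a congruence on $S/\rho$, that it shares its trace with $\theta/\rho$, and that it is minimal with respect to this property. The kernel identity will follow by the symmetric argument, swapping Lemma \ref{quotient}(1) for (2) and Lemma \ref{least}(1) for (2). The two ``in particular'' statements are then immediate from the formula, since $\rho \subseteq \theta_t$ forces $\rho \vee \theta_t = \theta_t$, and similarly for $k$.

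First I will check the trace equality. Note that $(\rho \vee \theta_t)/\rho$ is well-defined because $\rho \subseteq \rho \vee \theta_t$. Using $\rho \subseteq \theta$ together with $\theta_t \subseteq \theta$ and monotonicity of the join, one has the sandwich
\[
\theta_t \subseteq \rho \vee \theta_t \subseteq \rho \vee \theta = \theta,
\]
which, since $\tr{\theta_t} = \tr{\theta}$, forces $\tr{\rho \vee \theta_t} = \tr{\theta}$. Applying Lemma \ref{quotient}(1) with $\gamma = \rho \vee \theta_t$ then lifts this equality to $\tr{(\rho \vee \theta_t)/\rho} = \tr{\theta/\rho}$.

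Next I will argue minimality. Let $\bar{\sigma}$ be any congruence on $S/\rho$ satisfying $\tr{\bar{\sigma}} = \tr{\theta/\rho}$. By the correspondence theorem, $\bar{\sigma} = \sigma/\rho$ for some $\sigma \in \mathcal{C}(S)$ with $\rho \subseteq \sigma$, and Lemma \ref{quotient}(1) then yields $\tr{\sigma} = \tr{\theta}$. Lemma \ref{least}(1) upgrades this to $\theta_t \subseteq \sigma$, and combining with $\rho \subseteq \sigma$ gives $\rho \vee \theta_t \subseteq \sigma$; passing to the quotient, $(\rho \vee \theta_t)/\rho \subseteq \bar{\sigma}$. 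Hence $(\rho \vee \theta_t)/\rho$ is the least congruence on $S/\rho$ with trace $\tr{\theta/\rho}$, proving the first formula.

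The kernel identity is proved by the mirror argument, replacing traces by kernels throughout and invoking Lemma \ref{quotient}(2) and Lemma \ref{least}(2); the sandwich $\theta_k \subseteq \rho \vee \theta_k \subseteq \theta$ similarly yields $\ker{\rho \vee \theta_k} = \ker{\theta}$. The specializations follow at once: $\rho \subseteq \theta_t$ makes $\rho \vee \theta_t = \theta_t$, and analogously for $k$. I do not anticipate a serious obstacle; the only delicate point is keeping the direction of the correspondence straight, namely that a congruence on $S/\rho$ with prescribed trace lifts to a congruence on $S$ with the corresponding trace via Lemma \ref{quotient}, which is precisely what allows the ``lower $t$'' minimality to transfer cleanly between $S$ and $S/\rho$.
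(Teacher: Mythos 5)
Your proposal is correct and follows essentially the same route as the paper's proof: both establish $\tr{(\rho\vee\theta_t)}=\tr{\theta}$ via the sandwich $\theta_t\subseteq\rho\vee\theta_t\subseteq\theta$, transfer this to the quotient with Lemma \ref{quotient}, and then prove minimality by lifting a congruence on $S/\rho$ with the prescribed trace back to $S$ and applying Lemma \ref{least}. The only cosmetic difference is that you verify minimality against an arbitrary congruence with the given trace, while the paper applies the same lifting argument directly to $(\theta/\rho)_t$; these are equivalent.
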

\begin{proof}
 Assume that $\rho \subseteq \theta$. We first establish that $\tr{(\rho \vee \theta_t)/\rho}=\tr{\theta/\rho}$. To do this it will be sufficient to show that $\tr{(\rho \vee \theta_t)}=\tr{\theta}$ in view of Lemma \ref{quotient}. Notice that $\theta_t \subseteq \rho \vee \theta_t \subseteq \theta \vee \theta_t=\theta$, whence $\tr{\theta}=\tr{\theta_t} \subseteq \tr{(\rho \vee \theta_t)} \subseteq \tr{\theta}$. It follows that $\tr{(\rho \vee \theta_t)}=\tr{\theta}$ and $\tr{(\rho \vee \theta_t)/\rho}=\tr{\theta/\rho}$, whence $(\theta/\rho)_t \subseteq (\rho \vee \theta_t)/\rho$.

 Next, consider $(\rho \vee \theta_t)/\rho \subseteq (\theta/\rho)_t$. Let $\gamma \in \mathcal{C}(S)$ be such that $\rho \subseteq \gamma$ and $\gamma/\rho=(\theta/\rho)_t$. Then $\tr{\gamma/\rho}=\tr{\theta/\rho}$. It follows from Lemma \ref{quotient} that $\tr{\gamma}=\tr{\theta}$. Thus $\theta_t \subseteq \gamma$ and so $\rho \vee \theta_t \subseteq \gamma$, whence $(\rho \vee \theta_t)/\rho \subseteq \gamma/\rho=(\theta/\rho)_t$, and the required equality holds.

 An exactly parallel argument establishes the assertion concerning $(\theta/\rho)_k$ and so the proof is omitted.
\end{proof}

We are now ready for a description of the min network on the quotient semigroup.

\begin{proposition}\label{ab}
 Let $S$ be a regular semigroup, $n$ be a positive integer and $i$ be a non-negative integer. If $i \leqslant n$, then
 \begin{enumerate}[label=(\arabic*)]
 \item $(\alpha_i)_{S/\alpha_n}=\alpha_i/\alpha_n$, $(\beta_i)_{S/\alpha_n}=\beta_i/\alpha_n$;
 \item $(\alpha_i)_{S/\beta_n}=\alpha_i/\beta_n$, $(\beta_i)_{S/\beta_n}=\beta_i/\beta_n$.
 \end{enumerate}
\end{proposition}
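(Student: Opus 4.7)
The plan is to prove the proposition by induction on $i$, treating (1) and (2) in parallel since their arguments differ only by interchanging $\alpha_n$ and $\beta_n$. The engine is Proposition \ref{min}, which says that for $\rho \subseteq \theta$ one has $(\theta/\rho)_t = (\rho \vee \theta_t)/\rho$ and $(\theta/\rho)_k = (\rho \vee \theta_k)/\rho$, collapsing to $\theta_t/\rho$ and $\theta_k/\rho$ precisely when $\rho$ lies below $\theta_t$ (resp.\ $\theta_k$). I will combine this with the recursive definition $\alpha_i = (\beta_{i-1})_t$, $\beta_i = (\alpha_{i-1})_k$ applied inside the quotient semigroup.

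For the base case $i = 0$, $\alpha_0 = \beta_0 = \omega$ and the identity $\omega_{S/\rho} = \omega/\rho$ is the last clause of the cited consequence of Howie's result. For the inductive step I rewrite
\[
(\alpha_i)_{S/\alpha_n} = \bigl((\beta_{i-1})_{S/\alpha_n}\bigr)_t = (\beta_{i-1}/\alpha_n)_t, \qquad
(\beta_i)_{S/\alpha_n} = \bigl((\alpha_{i-1})_{S/\alpha_n}\bigr)_k = (\alpha_{i-1}/\alpha_n)_k,
\]
the second equalities being the inductive hypothesis. Proposition \ref{min} then collapses these right-hand sides to $\alpha_i/\alpha_n$ and $\beta_i/\alpha_n$ once one verifies the inclusions $\alpha_n \subseteq \alpha_{i-1}$, $\alpha_n \subseteq \beta_{i-1}$, $\alpha_n \subseteq \alpha_i$, $\alpha_n \subseteq \beta_i$. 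Part (2) is fully symmetric, exchanging $\alpha_n$ and $\beta_n$.

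All the required inclusions reduce to a preliminary observation that both $\{\alpha_j\}$ and $\{\beta_j\}$ are decreasing sequences, proved by a short side-induction on $j$: the monotonicity of $(\cdot)_t$ and $(\cdot)_k$, an immediate consequence of Lemma \ref{least}, gives $\alpha_j = (\beta_{j-1})_t \subseteq (\beta_{j-2})_t = \alpha_{j-1}$ from $\beta_{j-1} \subseteq \beta_{j-2}$, and symmetrically for $\beta$. Combined with $\alpha_n \subseteq \beta_{n-1}$ (immediate from $\alpha_n = (\beta_{n-1})_t$) this yields, for $i \leq n-1$, both $\alpha_n \subseteq \beta_{n-1} \subseteq \beta_i$ and $\alpha_n \subseteq \alpha_{n-1} \subseteq \alpha_{i-1}$, and analogously for the remaining cases. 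The main obstacle is really the book-keeping: ensuring at each stage that the relevant element of the min network on $S$ contains $\alpha_n$ (or $\beta_n$) so that $\theta/\alpha_n$ is a congruence and Proposition \ref{min}'s sharper ``in particular'' clause fires; there is no deep content once the monotonicity of the two sequences is in hand.
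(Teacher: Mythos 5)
Your proof is correct and follows essentially the same route as the paper's: induction on $i$ with base case $\alpha_0=\beta_0=\omega$ handled via $\omega_{S/\rho}=\omega/\rho$, then the recursion $\alpha_{i}=(\beta_{i-1})_t$, $\beta_{i}=(\alpha_{i-1})_k$ pushed into the quotient and collapsed by the ``in particular'' clause of Proposition \ref{min}. The only difference is that you make explicit the monotonicity of the sequences $\{\alpha_j\}$ and $\{\beta_j\}$ needed for the inclusions $\alpha_n\subseteq\alpha_i$ and $\alpha_n\subseteq\beta_i$, which the paper asserts without comment (and, like the paper, you gloss over the crossed case $i=n$, where $\alpha_n\subseteq\beta_n$ can fail; but that edge case is never used later).
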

\begin{proof}
 (1) We will first observe that the statement is true for $i=0$, and then complete the proof with an inductive argument.

 If $i=0$, then $\alpha_0=\omega$ and $\alpha_0/\alpha_n=\omega/\alpha_n=\omega_{S/\alpha_n}=(\alpha_0)_{S/\alpha_n}$. Similarly, we have $(\beta_0)_{S/\alpha_n}=\beta_0/\alpha_n$.

 Now suppose that the statement is valid for $i<n$. Then $\alpha_n \subseteq \alpha_{i+1}=(\beta_i)_t$, and hence
 \begin{align*}
  \alpha_{i+1}/\alpha_n=(\beta_i)_t/\alpha_n&=(\beta_i/\alpha_n)_t &&\text{by Proposition \ref{min}}\\
  &=((\beta_i)_{S/\alpha_n})_t &&\text{by induction hypothesis}\\
  &=(\alpha_{i+1})_{S/\alpha_n}.&&
 \end{align*}
 Similarly we may have $(\beta_{i+1})_{S/\alpha_n}=\beta_{i+1}/\alpha_n$.

 (2) The proof is closely similar to that for (1) and is omitted.
\end{proof}

The next two lemmas provide characterisations for congruences having the property that $\rho_t=\varepsilon$ or $\rho_k=\varepsilon$. We will later perform an analogous analysis to congruences having similar properties.

\begin{lemma}\label{t}
 The following statements concerning a congruence $\rho$ on a regular semigroup $S$ are equivalent.
 \begin{enumerate}[label=(\arabic*)]
  \item $\rho_t=\varepsilon$;
  \item $\rho \subseteq \mu$;
  \item $\tr{\rho}=\varepsilon|_{E_S}$;
  \item $\rho$ is idempotent separating;
  \item for every $e \in E_S$, $e\rho$ is a group;
  \item $\rho \subseteq \mathcal{H}$.
 \end{enumerate}
\end{lemma}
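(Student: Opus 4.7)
My plan is to organise the six conditions as satellites around (4)---the definition of idempotent separating congruence---and tie them together with a mix of direct definitional steps and one invocation of a classical structural fact.

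First, I would dispose of the easy chain (1) $\Leftrightarrow$ (3) $\Leftrightarrow$ (4) $\Leftrightarrow$ (2). The equivalence (3) $\Leftrightarrow$ (4) is immediate since $\tr\rho = \rho|_{E_S}$: saying $\tr\rho = \varepsilon|_{E_S}$ translates verbatim into the statement that $\rho$-related idempotents are equal. For (1) $\Leftrightarrow$ (3) I would invoke Result 3, which gives $\rho_t = (\tr\rho)^*$; if $\tr\rho = \varepsilon|_{E_S}$ then $\rho_t = \varepsilon^* = \varepsilon$, and conversely $\rho_t = \varepsilon$ forces $\tr\rho = \tr\rho_t = \varepsilon|_{E_S}$ since $\rho_t \,\mathcal{T}\, \rho$. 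Finally, (2) $\Leftrightarrow$ (4) is just the definition of $\mu$ as the greatest idempotent separating congruence on $S$.

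The remaining work concerns (5) and (6), which I would hook into (4) via the cycle (4) $\Rightarrow$ (6) $\Rightarrow$ (5) $\Rightarrow$ (4). The implication (5) $\Rightarrow$ (4) is immediate: if $e,f \in E_S$ with $e\,\rho\,f$, then $e$ and $f$ are both idempotents of the group $e\rho$, so $e=f$. For (6) $\Rightarrow$ (5), fix $e \in E_S$ and $a \in e\rho$; condition (6) places $a$ in $H_e$, which is a group with identity $e$ since it contains the idempotent $e$. Writing $a^{-1}$ for the group inverse of $a$ in $H_e$, multiplying $a\,\rho\,e$ on the left by $a^{-1}$ gives $e = a^{-1}a \,\rho\, a^{-1}e = a^{-1}$, so $a^{-1} \in e\rho$; together with the obvious closure $ab\,\rho\,e^2 = e$ under products, this makes $e\rho$ a subgroup of $H_e$, in particular a group.

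The only substantive step is (4) $\Rightarrow$ (6), and here I would quote the classical theorem (Howie \cite{howie}, Theorem 2.4.1) that on any regular semigroup the greatest idempotent separating congruence $\mu$ is contained in $\mathcal{H}$; combined with (2), this yields $\rho \subseteq \mu \subseteq \mathcal{H}$. This is the one piece of external machinery the argument relies on beyond the results already listed in the paper, and it is the main obstacle only in the sense that a self-contained treatment would have to reproduce Howie's construction of mutual inverses inside $\mathcal{H}$-classes.
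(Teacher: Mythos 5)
Your proof is correct. The paper treats this lemma more tersely: it disposes of the entire block $(3) \Leftrightarrow (4) \Leftrightarrow (5) \Leftrightarrow (6)$ by citing Howie's Proposition 2.4.5 together with Corollary 2.21 of Pastijn--Petrich, and then only writes out the cycle $(1) \Rightarrow (3) \Rightarrow (2) \Rightarrow (1)$, the last step using $\rho \subseteq \mu \Rightarrow \rho_t \subseteq \mu_t = \varepsilon$. Your decomposition --- the definitional chain $(1) \Leftrightarrow (3) \Leftrightarrow (4) \Leftrightarrow (2)$ plus the cycle $(4) \Rightarrow (6) \Rightarrow (5) \Rightarrow (4)$ --- covers the same ground but replaces most of the citations by short self-contained arguments: uniqueness of the idempotent in a group for $(5) \Rightarrow (4)$, and, for $(6) \Rightarrow (5)$, Green's theorem that an $\mathcal{H}$-class containing an idempotent is a group together with your explicit verification that $e\rho$ is closed under products and under the group inverse taken in $H_e$, hence is a subgroup of $H_e$. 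The one genuinely nontrivial external input is the same in both treatments, namely that an idempotent separating congruence on a regular semigroup is contained in $\mathcal{H}$ (equivalently $\mu \subseteq \mathcal{H}$); this is precisely the content of the result the paper cites from Howie (your pointer to Theorem 2.4.1 should be to Proposition 2.4.5, but the fact invoked is the right one, and it is needed only for $(4) \Rightarrow (6)$). What your version buys is self-containedness and a visible proof that each idempotent $\rho$-class really is a group; what the paper's version buys is brevity by outsourcing the four-way equivalence to the literature.
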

\begin{proof}
 The equivalence of (3), (4), (5) and (6) follows from \cite[Proposition 2.4.5]{howie} and \cite[Corollary 2.21]{pp1986}.

 $(1) \Rightarrow (3)$. Let $\rho_t=\varepsilon$. Then $\tr{\rho}=\tr{\rho_t}=\rho_t|_{E_S}=\varepsilon|_{E_S}$.

 $(3) \Rightarrow (2)$. By $\tr{\rho}=\varepsilon|_{E_S}$ we have that $\rho$ is idempotent separating. Hence $\rho \subseteq \mu$ since $\mu$ is the greatest idempotent separating congruence.

 $(2) \Rightarrow (1)$. From $\rho \subseteq \mu$ we infer that $\rho_t \subseteq \mu_t=\varepsilon$ which implies that $\rho_t=\varepsilon$.
\end{proof}

\begin{lemma}\label{kreg}
 The following statements concerning a congruence $\rho$ on a regular semigroup $S$ are equivalent.
 \begin{enumerate}[label=(\arabic*)]
  \item $\rho_k=\varepsilon$;
  \item $\rho \subseteq \tau$;
  \item $\ker{\rho}=E_S$;
  \item $\rho$ is idempotent pure;
  \item for every $e \in E_S$, $e\rho$ is a band;
  \item $\rho$ satisfies the implication $x^2\,\rho\,x \Rightarrow x^2=x$.
 \end{enumerate}
\end{lemma}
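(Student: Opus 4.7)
The plan is to mirror the structure of the proof of Lemma \ref{t}, establishing the six equivalences in natural groups. The equivalence (3) $\Leftrightarrow$ (4) is immediate from the definition of idempotent pure stated in the paragraph preceding Lemma \ref{t}.

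For (1) $\Leftrightarrow$ (3), I would invoke Result 3's formula $\rho_k=\{(x,x^2)\mid x\in\ker\rho\}^*$: if $\rho_k=\varepsilon$, then every generator $(x,x^2)$ already lies in $\varepsilon$, forcing $x=x^2$ for every $x\in\ker\rho$ and hence $\ker\rho\subseteq E_S$, with the reverse inclusion automatic. Conversely, $\ker\rho=E_S$ collapses the generating set onto the diagonal, so $\rho_k=\varepsilon$. For (2) $\Leftrightarrow$ (3), recall that $\tau$ is by definition the greatest idempotent-pure congruence, so $\ker\tau=E_S$: then $\rho\subseteq\tau$ gives $\ker\rho\subseteq\ker\tau=E_S$, and conversely $\ker\rho=E_S$ makes $\rho$ idempotent pure, hence $\rho\subseteq\tau$ by maximality.

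For (3) $\Leftrightarrow$ (5), note first that for any $e\in E_S$ the class $e\rho$ is automatically a subsemigroup, since $a\,\rho\,e$ and $b\,\rho\,e$ give $ab\,\rho\,e^2=e$. If $\ker\rho=E_S$, then $e\rho\subseteq\ker\rho=E_S$, so $e\rho$ is a band. Conversely, if every such $e\rho$ is a band and $a\in\ker\rho$, then $a\,\rho\,e$ for some $e\in E_S$, placing $a$ in $e\rho\subseteq E_S$.

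The main obstacle is the equivalence (3) $\Leftrightarrow$ (6), specifically the forward direction. The reverse is straightforward: for $a\in\ker\rho$ with $a\,\rho\,e\in E_S$, we compute $a^2\,\rho\,e^2=e\,\rho\,a$, so $a^2\,\rho\,a$ and (6) yields $a^2=a$. The forward direction requires lifting an idempotent of the quotient $S/\rho$ back to an idempotent of $S$: from $x^2\,\rho\,x$ we know $x\rho$ is idempotent in $S/\rho$, and applying Lallement's lemma (valid because $S$ is regular) produces $f\in E_S$ with $f\,\rho\,x$; this gives $x\in\ker\rho=E_S$, so $x=x^2$. This Lallement step is the one nontrivial input beyond Result 3 and the definitions.
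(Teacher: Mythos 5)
Your proof is correct, and it reaches the same equivalences the paper does, but by a somewhat different route: the paper disposes of the equivalence of (1), (3), (4) and (5) in one stroke by citing Proposition 3.1 of Pastijn--Petrich and then only argues the cycle $(2)\Rightarrow(3)\Rightarrow(6)\Rightarrow(2)$, whereas you prove everything from scratch, organized as a hub of equivalences centred on condition (3). Your direct arguments are all sound: the generator argument for $(1)\Leftrightarrow(3)$ via $\rho_k=\{(x,x^2)\mid x\in\ker{\rho}\}^*$ works (one could shortcut $(1)\Rightarrow(3)$ even further via $\ker{\rho}=\ker{\rho_k}=\ker{\varepsilon}=E_S$), and the observation that each idempotent class $e\rho$ is automatically a subsemigroup settles $(3)\Leftrightarrow(5)$. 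The most valuable difference is in $(3)\Rightarrow(6)$: the paper simply asserts that $x^2\,\rho\,x$ implies $x\in\ker{\rho}$, which for a general regular semigroup genuinely requires lifting the idempotent $x\rho$ of $S/\rho$ to an idempotent of $S$ via Lallement's lemma; you name that input explicitly, which makes your version self-contained and arguably more transparent than the published one. What the paper's approach buys is brevity; what yours buys is independence from the external reference and an explicit record of where regularity is used.
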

\begin{proof}
 The equivalence of (1), (3), (4) and (5) follows from \cite[Proposition 3.1]{pp1988}.

 $(2) \Rightarrow (3)$. From $\rho \subseteq \tau$ we infer that $\ker{\rho} \subseteq \ker{\tau}=E_S$ which implies that $\ker{\rho}=E_S$.

 $(3) \Rightarrow (6)$. Let $x \in S$ be such that $x^2\,\rho\,x$. Then $x \in \ker{\rho}=E_S$ which yields $x^2=x$.

 $(6) \Rightarrow (2)$. Suppose that $x \in \ker{\rho}$. Then $x^2\,\rho\,x$ and thus $x^2=x$, which implies that $\ker{\rho} \subseteq E_S$ and thus $\ker{\rho}=E_S$. Therefore $\rho \subseteq \tau$ since $\tau$ is the greatest idempotent pure congruence.
\end{proof}

\section{Relationships among the quasivarieties}

We will now explore relationships among the quasivarieties related to the min network on inverse semigroups. After providing a description of the idempotent $\rho_t$-classes, we present characterisations for congruences satisfying $\rho_{kt}=\varepsilon$, $\rho_{tk}=\varepsilon$, or $\rho_{tkt}=\varepsilon$. We then reach a solution to the problem proposed in \cite{wf2011}. \emph{Throughout this section, unless otherwise stated, $S$ denotes an arbitrary inverse semigroup with semilattice $E_S$ of idempotents.}

For a given congruence $\rho$ on $S$, it is known that $\rho_t$ is uniquely determined by its kernel $\ker{\rho_t}$ and its trace $\tr{\rho_t}$. But $\tr{\rho_t}=\tr{\rho}$ and hence in order to characterise $\rho_t$, it suffices to provide a description of $\ker{\rho_t}$. Noticing that $\ker{\rho_t}$ is a union of the idempotent $\rho_t$-classes, we establish a characterisation for the idempotent $\rho_t$-classes in virtue of a description for the least group congruence on an inverse semigroup, $$\sigma=\{(x,y) \in S \times S\,|\,(\exists\,e \in E_S)~xe=ye\}.$$ The following proposition is a key result in the article.

\begin{proposition}\label{kernel}
 Let $\rho$ be a congruence on an inverse semigroup $S$. Then for any $e \in E_S$ we have $$\ker{\sigma_{e\rho}}=e\rho_t.$$
\end{proposition}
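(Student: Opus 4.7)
The plan is to decode both sides into elementary conditions and then verify the two inclusions using Result \ref{rholt}. First I would note that the $\rho$-class $e\rho$ is an inverse subsemigroup of $S$: it is a subsemigroup because $e$ is idempotent, and it is closed under inversion because $a\,\rho\,e$ forces $a^{-1}\,\rho\,e$. Its idempotents are precisely the elements of $E_S$ that are $\rho$-equivalent to $e$, and the natural partial order on $e\rho$ coincides with the restriction of the one on $S$. Using the classical identification $\ker\sigma_T = E_T\omega$ for an inverse semigroup $T$, this lets me rewrite
\[
\ker\sigma_{e\rho}=\{a\in e\rho \mid \exists\, g\in E_S,\; g\,\rho\, e,\; ag=g\},
\]
which I would then compare to $e\rho_t$.

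For $e\rho_t\subseteq \ker\sigma_{e\rho}$, I would take $a\in e\rho_t$ and apply Result \ref{rholt} to extract $f\in E_S$ with $af=ef$ and $f\,\rho\, a^{-1}a\,\rho\, e$. The key observation is that $g:=af=ef$ is automatically an idempotent of $S$ (being the product of the commuting idempotents $e$ and $f$) and is $\rho$-equivalent to $e$. Since $af\le a$ in any inverse semigroup, the idempotence of $g$ upgrades this to $ag=g$, so $a$ satisfies the defining condition on the right.

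For the reverse inclusion, given $a\in e\rho$ together with $g\in E_S$ satisfying $g\,\rho\, e$ and $ag=g$, the right idempotent to feed into Result \ref{rholt} is $f:=eg$. Then $f\,\rho\, e\,\rho\, a^{-1}a$ is immediate, and the chain $eg\le g\le a$ gives $af=eg=ef$, so Result \ref{rholt} delivers $a\,\rho_t\, e$. The main obstacle I anticipate is spotting this choice of $f$: the naive first attempt $f=g$ fails because nothing forces $eg=g$, and the fix is to slide down to the meet $eg$, which is simultaneously below $a$ and $\rho$-equivalent to $e$.
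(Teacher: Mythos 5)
Your argument is correct, but it takes a genuinely different route from the paper's. The paper proves both inclusions through the description $\rho_t=(\rho\cap\mathcal{F})^*$ of Result \ref{tkgen}: for $\ker{\sigma_{e\rho}}\subseteq e\rho_t$ it exhibits a single generating pair $x\,(\rho\cap\mathcal{F})\,g$ with $g\in E_{e\rho}$ (obtained by looking at $x^{-1}$), and for the reverse inclusion it unwinds a transitive chain $x=z_1\,(\rho\cap\mathcal{F})\,z_2\,\cdots\,(\rho\cap\mathcal{F})\,z_n=e$ and manufactures the idempotent $g=(z_1^{-1}z_2)\cdots(z_{n-1}^{-1}z_n)$ witnessing $x\,\sigma_{e\rho}\,e$. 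You instead work entirely with the closed-form criterion of Result \ref{rholt} together with the classical identification $\ker{\sigma_T}=E_T\omega$ (which the paper itself invokes later, in the proof of Proposition \ref{tkt}), reducing both inclusions to short computations with the natural partial order: $g:=af=ef$ is an idempotent below $a$ in one direction, and $f:=eg$ is the right witness in the other. Your route avoids the chain argument and the generated-congruence machinery altogether, at the price of importing $\ker{\sigma}=E\omega$; the paper's route is self-contained given Result \ref{tkgen} and produces its witnesses explicitly from the chain. The individual steps you sketch all check out: $af\le a$ with $af$ idempotent does force $a(af)=af$, and $a(eg)=(ag)e=ge=eg=e(eg)$ confirms $af=ef$ for your choice of $f$, with $f\,\rho\,a^{-1}a\,\rho\,e$ holding because every element of $e\rho$ has its idempotents $\rho$-related to $e$.
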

\begin{proof}
 If $x \in \ker{\sigma_{e\rho}}$, then $x^{-1} \in \ker{\sigma_{e\rho}}$ and $x^{-1}\,\sigma_{e\rho}\,f$ for some $f \in E_S$, so $x^{-1}g=fg$ for some $g \in E_{e\rho}$, and thus $x^{-1}g \in E_S$ while $x\,\rho\,e\,\rho\,g$, that is, $x\,(\rho \cap \mathcal{F})\,g$. By Lemma \ref{tkgen} we have $x\,\rho_t\,g$. But $g\,\rho\,e$ and so $g\,\rho_t\,e$, since $\tr{\rho}=\tr{\rho_t}$. It follows that $x\,\rho_t\,e$. Thus $x \in e\rho_t$ and so $\ker{\sigma_{e\rho}} \subseteq e\rho_t$.

 Conversely, assume that $x \in e\rho_t$. In view of Remark \ref{f} we know that $\rho \cap \mathcal{F}$ is reflexive, symmetric and compatible and so $\rho_t=(\rho \cap \mathcal{F})^*=(\rho \cap \mathcal{F})^\infty$ by \cite[Proposition 1.5.8, Lemma 1.5.5, Proposition 1.4.9]{howie}. Then there exist $z_1$, $z_2$, $\cdots$, $z_n \in S$ such that $$x=z_1~(\rho \cap \mathcal{F})~z_2~(\rho \cap \mathcal{F})~\cdots~(\rho \cap \mathcal{F})~z_{n-1}~(\rho \cap \mathcal{F})~z_n=e.$$ Let $g=(z_1^{-1}z_2)(z_2^{-1}z_3)\cdots(z_{n-1}^{-1}z_n)$. Then $g \in E_S$, $$xg=(z_1z_1^{-1})(z_2z_2^{-1})\cdots(z_{n-1}z_{n-1}^{-1})z_n \in E_S$$ and $x(xg)=x(xg)g=(xg)(xg)=xg=(xg)e=e(xg)$. From the foregoing and the fact that $xg\,\rho\,e$ we obtain that there exists $f=xg \in E_{e\rho}$ such that $xf=ef$, that is, $x \in \ker{\sigma_{e\rho}}$ and therefore also that $e\rho_t \subseteq \ker{\sigma_{e\rho}}$.
\end{proof}

We are now ready for characterisations of congruences having the property that $\rho_{kt}=\varepsilon$, $\rho_{tk}=\varepsilon$, or $\rho_{tkt}=\varepsilon$. The next four propositions are generalisations of the relating results in \cite{inverse} and \cite{wf2011}.

\begin{proposition}\label{k}
 The following statements concerning a congruence $\rho$ on an inverse semigroup $S$ are equivalent.
 \begin{enumerate}[label=(\arabic*)]
  \item $\rho_k=\varepsilon$;
  \item $\rho \subseteq \tau$;
  \item $\ker{\rho}=E_S$;
  \item $\rho$ is idempotent pure;
  \item $\ker{\rho}$ is a semilattice;
  \item for every $e \in E_S$, $e\rho$ is a semilattice;
  \item $\rho$ satisfies the implication $x^2\,\rho\,x \Rightarrow x^2=x$.
 \end{enumerate}
\end{proposition}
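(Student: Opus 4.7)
The proof plan is to leverage Lemma \ref{kreg}, which already gives the equivalence of statements (1), (2), (3), (4), and (7) in the more general regular semigroup setting. Thus the only new content in Proposition \ref{k} is the upgrade from ``band'' to ``semilattice'' in (6) and the insertion of the new condition (5) asserting that $\ker\rho$ itself is a semilattice. The whole point is that in an inverse semigroup $S$, the idempotents commute, so any subsemigroup of $E_S$ is automatically a semilattice.

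I would structure the remainder as the short cycle $(3) \Rightarrow (5) \Rightarrow (6) \Rightarrow (3)$. For $(3) \Rightarrow (5)$: if $\ker\rho=E_S$, then $\ker\rho$ coincides with the semilattice of idempotents of the inverse semigroup $S$, which is a semilattice by assumption on $S$. For $(5) \Rightarrow (6)$: fix $e \in E_S$; then $e\rho$ is a subsemigroup of $S$ (since $a,b \in e\rho$ implies $ab\,\rho\,e^2=e$), and $e\rho \subseteq \ker\rho$, so $e\rho$ is a subsemigroup of the semilattice $\ker\rho$ and hence itself a semilattice. For $(6) \Rightarrow (3)$: given $a \in \ker\rho$, there is $e \in E_S$ with $a\,\rho\,e$, so $a \in e\rho$; since $e\rho$ is a semilattice every element of it is idempotent, so $a \in E_S$; this gives $\ker\rho \subseteq E_S$, and the reverse inclusion is automatic.

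Since Lemma \ref{kreg} supplies $(1) \Leftrightarrow (2) \Leftrightarrow (3) \Leftrightarrow (4) \Leftrightarrow (7)$, concatenating the three implications above with the lemma closes the seven-fold equivalence.

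The main (and essentially only) obstacle is conceptual rather than technical, namely recognising that the passage from a regular semigroup to an inverse semigroup promotes ``band'' to ``semilattice'' for free, since $E_S$ is a semilattice and $\ker\rho = E_S$ whenever $\rho$ is idempotent pure; every intermediate subsemigroup inherits commutativity of idempotents. No additional calculation beyond what is already available in Lemma \ref{kreg} is required.
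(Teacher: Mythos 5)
Your proposal is correct and follows essentially the same route as the paper: cite Lemma \ref{kreg} for the equivalence of (1), (2), (3), (4), (7), then close the cycle $(3)\Rightarrow(5)\Rightarrow(6)\Rightarrow(3)$ using the fact that $E_S$ is a semilattice and that each $e\rho$ is a subsemigroup of $\ker\rho$. No discrepancies to report.
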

\begin{proof}
 The equivalence of (1), (2), (3), (4) and (7) follows from Lemma \ref{kreg}.

 $(3) \Rightarrow (5)$. It follows immediately from the fact that $E_S$ is a semilattice.

 $(5) \Rightarrow (6)$. For every $e \in E_S$, we have $e\rho \subseteq \ker{\rho}$. From the hypothesis and the fact that $e\rho$ is an inverse subsemigroup of $S$, we get that $e\rho$ is also a semilattice.

 $(6) \Rightarrow (3)$. If $x \in \ker{\rho}$, then $x\,\rho\,f$ for some $f \in E_S$, whence $x \in E_S$ by assumption. Hence $\ker{\rho} \subseteq E_S$ and $\ker{\rho}=E_S$.
\end{proof}

The next proposition is a generalisation of \cite[Exercises \@Roman3.6.9 (\@roman9)]{inverse}.

\begin{proposition}\label{kt}
 The following statements concerning a congruence $\rho$ on an inverse semigroup $S$ are equivalent.
 \begin{enumerate}[label=(\arabic*)]
  \item $\rho_{kt}=\varepsilon$;
  \item $\rho_k \subseteq \mu$;
  \item $\ker{\rho} \subseteq E_S\zeta$;
  \item $\ker{\rho}$ is a Clifford semigroup;
  \item for every $e \in E_S$, $e\rho$ is a Clifford semigroup;
  \item $\rho$ satisfies the implication $x^2\,\rho\,x$, $e \in E_S \Rightarrow xe=ex$.
 \end{enumerate}
\end{proposition}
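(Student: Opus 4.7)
The idea is to orbit all seven conditions around the concrete structural statement (3), i.e.\ $\ker{\rho} \subseteq E_S\zeta$, which ties in (4) and (6) by near-tautological reformulations and links to (1)--(2) via the trace/kernel algebra from Section~2.

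First, I would dispatch $(1)\iff(2)$ by applying Lemma~\ref{t} to the congruence $\rho_k$: the assertion $(\rho_k)_t=\varepsilon$ is precisely $\rho_k \subseteq \mu$. Second, $(2)\iff(3)$ is Lemma~\ref{least}(2) with $\theta=\mu$, combined with the classical fact that on an inverse semigroup $\ker{\mu}=E_S\zeta$. Third, $(3)\iff(6)$ reduces to the observation that on an inverse semigroup $x^2\,\rho\,x$ is equivalent to $x\in\ker{\rho}$: one direction is clear; for the other, left-multiplying $x\,\rho\,x^2$ by $x^{-1}$ gives $x^{-1}x\,\rho\,x$, placing $x$ in a $\rho$-class of an idempotent. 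Fourth, $(3)\iff(4)$ is essentially definitional: $\ker{\rho}$ is an inverse subsemigroup of $S$ containing $E_S$, so its set of idempotents coincides with $E_S$, and $\ker{\rho}$ is Clifford exactly when its idempotents centralise it, that is, $\ker{\rho}\subseteq E_S\zeta$.

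The genuine work lies in $(3)\iff(5)$. The direction $(3)\Rightarrow(5)$ is immediate: for each $e\in E_S$, the class $e\rho$ is an inverse subsemigroup contained in $\ker{\rho}\subseteq E_S\zeta$, so its idempotents centralise it, making $e\rho$ Clifford. For $(5)\Rightarrow(3)$, I would fix $x\in\ker{\rho}$ with $x\,\rho\,h$ for some $h\in E_S$, take an arbitrary $g\in E_S$, and aim to prove $xg=gx$. Using $g^{-1}=g$ and the fact that $E_S$ is a semilattice, one quickly computes
\[(xg)(xg)^{-1}=xgx^{-1}\quad\text{and}\quad (xg)^{-1}(xg)=gx^{-1}xg=x^{-1}xg,\]
both idempotents that are $\rho$-related to $hg$, hence members of the Clifford class $(hg)\rho$. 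Since each element $a$ of a Clifford semigroup satisfies $aa^{-1}=a^{-1}a$, applying this to $a=xg$ yields $xgx^{-1}=x^{-1}xg$. Right-multiplying by $x$ and using the identities $xf=x=fx$ and $fg=gf$, where $f=x^{-1}x$, collapses the two sides to $xg$ and $gx$ respectively, yielding $\ker{\rho}\subseteq E_S\zeta$. This final computation is the only real obstacle; everything else is bookkeeping against the lemmas already in hand.
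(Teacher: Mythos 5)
Your overall architecture is sound and close to the paper's: $(1)\iff(2)$ by applying Lemma \ref{t} to $\rho_k$, $(2)\iff(3)$ via Lemma \ref{least}(2) together with $\ker{\mu}=E_S\zeta$, and the remaining conditions clustered around (3). The paper instead runs a single cycle $(2)\Rightarrow(3)\Rightarrow(4)\Rightarrow(5)\Rightarrow(6)\Rightarrow(2)$, and for the only nontrivial step, $(5)\Rightarrow(6)$, it invokes the structural fact that an inverse semigroup which is a union of groups is Clifford (applied to $\ker{\rho}$, a union of idempotent $\rho$-classes). Your direct element computation for $(5)\Rightarrow(3)$ is a legitimate and more elementary substitute for that structure theorem. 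However, two of your computations are wrong as literally written and need repair.

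First, in $(3)\iff(6)$ you justify $x^2\,\rho\,x \Rightarrow x\in\ker{\rho}$ by saying that left-multiplying by $x^{-1}$ gives $x^{-1}x\,\rho\,x$; it actually gives $x^{-1}x\,\rho\,(x^{-1}x)x$, which is not what you want. The correct (and standard) route is that $x\rho$ is an idempotent of $S/\rho$, hence $x\rho=(x\rho)(x\rho)^{-1}=(xx^{-1})\rho$, so $x\,\rho\,xx^{-1}\in E_S$. Second, and more seriously, in $(5)\Rightarrow(3)$ you invoke the identities $xf=x=fx$ with $f=x^{-1}x$. In a general inverse semigroup only $xf=x$ holds; $fx=(x^{-1}x)x$ need not equal $x$. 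Your computation does go through, but only after you first apply the Clifford hypothesis to $x$ itself: $x$ lies in the idempotent class $h\rho$, which is Clifford by (5), so $xx^{-1}=x^{-1}x=f$, and only then does $fx=(xx^{-1})x=x$ hold. With that sentence inserted, right-multiplying $xgx^{-1}=x^{-1}xg$ by $x$ does collapse the two sides to $xg$ and $gx$ as you claim. Both gaps are local and easily filled, but as written the argument does not stand on its own.
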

\begin{proof}
 The equivalence of (1) and (2) follows directly from Proposition \ref{t}.

 $(2) \Rightarrow (3)$. Since $\rho_k \subseteq \mu$, we have $\ker{\rho_k} \subseteq \ker{\mu}$ and thus $\ker{\rho} \subseteq E_S\zeta$ by \cite[Lemma \@Roman3.3.5]{inverse}.

 $(3) \Rightarrow (4)$. Clearly, $E_S\zeta$ is a Clifford semigroup. But $\ker{\rho}$ is an inverse subsemigroup of $S$ and also an inverse subsemigroup of $E_S\zeta$. Hence $\ker{\rho}$ is a Clifford semigroup.

 $(4) \Rightarrow (5)$. For every $e \in E_S$, $e\rho$ is an inverse subsemigroup of $S$, and therefore, an inverse subsemigroup of $\ker{\rho}$. Now $\ker{\rho}$ is a Clifford semigroup and so is $e\rho$.

 $(5) \Rightarrow (6)$. If $x^2\,\rho\,x$, then the idempotent $\rho$-class $x\rho$ is a Clifford semigroup by assumption, and hence a union of groups. But $\ker{\rho}$ is a union of the idempotent $\rho$-classes. Then $\ker{\rho}$ is also a union of groups. This together with the fact that $\ker{\rho}$ is an inverse subsemigroup of $S$ gives that $\ker{\rho}$ is a Clifford semigroup. Therefore, $x$ commutes with idempotents.

 $(6) \Rightarrow (2)$. Suppose that $a \in \ker{\rho}$. Then $a^2\,\rho\,a$ and $a \in E_S\zeta$ by assumption. It follows that $\ker{\rho} \subseteq \ker{\mu}$ and thus also that $\rho_k \subseteq \mu$ by Lemma \ref{least}.
\end{proof}

The next proposition is a generalisation of \cite[Exercises \@Roman3.7.10 (\@roman2)]{inverse}.

\begin{proposition}\label{tk}
 The following statements concerning a congruence $\rho$ on an inverse semigroup $S$ are equivalent.
 \begin{enumerate}[label=(\arabic*)]
  \item $\rho_{tk}=\varepsilon$;
  \item for every $e \in E_S$, $e\rho$ is an $E$-unitary inverse semigroup;
  \item $\rho$ satisfies the implication $xy=y$, $x\,\rho\,y \Rightarrow x^2=x$.
 \end{enumerate}
\end{proposition}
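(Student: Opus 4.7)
The plan is to prove (1)~$\Leftrightarrow$~(2) via Proposition~\ref{kernel} and Lemma~\ref{kreg}, and then (2)~$\Leftrightarrow$~(3) by recognising the condition in (3) as a disguised $E$-unitary axiom.

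For (1)~$\Leftrightarrow$~(2), I would first unfold $\rho_{tk}=\varepsilon$ as $(\rho_t)_k=\varepsilon$ and invoke Lemma~\ref{kreg} applied to the congruence $\rho_t$, which reduces (1) to the condition $\ker{\rho_t}=E_S$. Since $\ker{\rho_t}=\bigcup_{e\in E_S}e\rho_t$, and Proposition~\ref{kernel} identifies $e\rho_t$ with $\ker{\sigma_{e\rho}}$, the equality $\ker{\rho_t}=E_S$ becomes $\ker{\sigma_{e\rho}}\subseteq E_S$ for every $e\in E_S$. Noting that $E_{e\rho}\subseteq\ker{\sigma_{e\rho}}$ always holds, this in turn is equivalent to $\ker{\sigma_{e\rho}}=E_{e\rho}$ for every such $e$, which is precisely the statement that each $e\rho$ is $E$-unitary.

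For (2)~$\Leftrightarrow$~(3), the first observation is that in an inverse semigroup the $E$-unitary axiom ``$xy=x\Rightarrow y^{2}=y$'' is equivalent, by taking inverses of both sides, to the dual form ``$xy=y\Rightarrow x^{2}=x$''. The implication (3)~$\Rightarrow$~(2) is then immediate: given $e\in E_S$ and $x,y\in e\rho$ with $xy=y$, transitivity of $\rho$ gives $x\,\rho\,y$, so (3) yields $x^{2}=x$. For (2)~$\Rightarrow$~(3), given $xy=y$ and $x\,\rho\,y$, the key computation is
\[yy^{-1}=xyy^{-1}\,\rho\,y(yy^{-1})=y,\]
showing that $y\,\rho\,yy^{-1}$; hence both $x$ and $y$ lie in the idempotent $\rho$-class $(yy^{-1})\rho$, and applying (2) to this class in its dual $E$-unitary form gives $x^{2}=x$.

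The main obstacle is the (2)~$\Rightarrow$~(3) direction: the hypothesis in (3) speaks about an arbitrary $\rho$-related pair, not about elements already living in an idempotent $\rho$-class, so some work is needed to exhibit an idempotent in the $\rho$-class of $y$. The identity $yy^{-1}=xyy^{-1}\,\rho\,y$, which uses only $xy=y$ and compatibility of $\rho$, handles this cleanly and is the one nontrivial move in the argument; everything else is unpacking definitions and chaining the two displayed equivalences.
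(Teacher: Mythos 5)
Your overall architecture is sound and in fact streamlines the paper's argument: the paper proves the cycle $(1)\Rightarrow(2)\Rightarrow(3)\Rightarrow(1)$, using Result \ref{rholt} for the last leg, whereas you prove $(1)\Leftrightarrow(2)$ and $(2)\Leftrightarrow(3)$ directly. Your $(1)\Leftrightarrow(2)$ rests on exactly the same two ingredients as the paper's $(1)\Rightarrow(2)$ (Proposition \ref{kernel} together with the standard fact that an inverse semigroup $T$ is $E$-unitary if and only if $\ker{\sigma_T}=E_T$), and your $(3)\Rightarrow(2)$ is genuinely more elementary than the paper's $(3)\Rightarrow(1)$, which needs the explicit formula for $\rho_t$.

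However, the one computation you single out as the nontrivial move is wrong as written. In the display
\[ yy^{-1}=xyy^{-1}\,\rho\,y(yy^{-1})=y, \]
the final equality $y(yy^{-1})=y$ does not hold in a general inverse semigroup: $y(yy^{-1})=y^2y^{-1}$, and $y$ need not commute with the idempotent $yy^{-1}$ (take $y$ a partial bijection whose range is disjoint from its domain; then $y(yy^{-1})$ is the empty map). The correct identities are $(yy^{-1})y=y$ and $y(y^{-1}y)=y$. The conclusion you need, $y\,\rho\,yy^{-1}$, is nevertheless true and has a one-line proof along the lines the paper itself uses: from $xy=y$ and $x\,\rho\,y$ one gets $y=xy\,\rho\,y^2$, so $y\rho$ is an idempotent of $S/\rho$ and hence $(yy^{-1})\rho=(y\rho)(y\rho)^{-1}=y\rho$. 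With that repair, the rest of your $(2)\Rightarrow(3)$ --- placing $x$ and $y$ in the class $(yy^{-1})\rho$ and applying the dual $E$-unitary implication $xy=y\Rightarrow x^2=x$, which is indeed equivalent to the usual one by taking inverses --- goes through.
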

\begin{proof}
 $(1) \Rightarrow (2)$. Notice that $e\rho_t$ is a semilattice by Proposition \ref{k}. Next, $e\rho_t=\ker{\sigma_{e\rho}}$ by Proposition \ref{kernel}. It follows that the kernel of the least group congruence on $e\rho$ is a semilattice and so, by \cite[Proposition \@Roman3.7.2]{inverse}, $e\rho$ is an $E$-unitary inverse semigroup, which gives us the result we require.

 $(2) \Rightarrow (3)$. Let $x$, $y \in S$ be such that $xy=y$ and $x\,\rho\,y$. Then $x\,\rho\,y=xy\,\rho\,xx$ while $x \in \ker{\rho}$ and $x\,\rho\,xx^{-1}\,\rho\,yy^{-1}$. Furthermore, $x(yy^{-1})=yy^{-1}=(yy^{-1})(yy^{-1})$ so that $x\,\sigma_{(yy^{-1})\rho}\,yy^{-1}$ whence $x \in \ker{\sigma_{(yy^{-1})\rho}}$. It follows that $x^2=x$ since $(yy^{-1})\rho$ is $E$-unitary.

 $(3) \Rightarrow (1)$. Suppose that $a \in \ker{\rho_t}$. Then $a\,\rho_t\,e$ for some $e \in E_S$. By Result \ref{rholt}, $af=ef$ and $f\,\rho\,a^{-1}a\,\rho\,e$ for some $f \in E_S$. So we have $a(ef)=(af)e=(ef)e=ef$, and this together with $ef\,\rho\,e\,\rho\,a$ and the assumption gives $a \in E_S$. Thus $\ker{\rho_t}=E_S$ and so $\rho_{tk}=\varepsilon$.
\end{proof}

The next proposition is a generalisation of \cite[Proposition 2.2]{wf2011}.

\begin{proposition}\label{tkt}
The following statements concerning a congruence $\rho$ on an inverse semigroup $S$ are equivalent.
\begin{enumerate}[label=(\arabic*)]
 \item $\rho_{tkt}=\varepsilon$;
 \item for every $e \in E_S$, $e\rho$ is an $E\omega$-Clifford semigroup;
 \item $\rho$ satisfies the implication $xy=y$, $x\,\rho\,y\,\rho\,e$, $e \in E_S \Rightarrow xe=ex$.
\end{enumerate}
\end{proposition}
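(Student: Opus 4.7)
The plan is to establish (1)$\Leftrightarrow$(2) by combining Propositions \ref{kernel} and \ref{kt}, and then to close the loop with (2)$\Leftrightarrow$(3), following the style of the proof of Proposition \ref{tk}.

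For (1)$\Leftrightarrow$(2) I would rewrite $\rho_{tkt}=\varepsilon$ as $(\rho_t)_{kt}=\varepsilon$ and apply Proposition \ref{kt} to the congruence $\rho_t$; this gives the equivalence with the statement that $e\rho_t$ is a Clifford semigroup for every $e\in E_S$. Proposition \ref{kernel} yields $e\rho_t=\ker\sigma_{e\rho}$, and for any inverse semigroup $T$ the kernel of $\sigma_T$ is exactly $E_T\omega$; so $e\rho_t=E_{e\rho}\omega$. The Clifford condition on each $e\rho_t$ therefore translates verbatim into (2), that each $e\rho$ is $E\omega$-Clifford.

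For (2)$\Rightarrow$(3), suppose $xy=y$ and $x\,\rho\,y\,\rho\,e$ with $e\in E_S$. Right-multiplying $xy=y$ by $y^{-1}$ gives $x\cdot yy^{-1}=yy^{-1}$, so $yy^{-1}\le x$; since $yy^{-1}\in E_{e\rho}$ this places $x$ in $E_{e\rho}\omega$. The idempotents of $E_{e\rho}\omega$ coincide with $E_{e\rho}$ (every idempotent of $S$ lying in $e\rho$ is in $E_{e\rho}$), and $e\in E_{e\rho}$, so the Clifford property of $E_{e\rho}\omega$ forces $xe=ex$. For (3)$\Rightarrow$(2), fix $e\in E_S$ and let $a\in E_{e\rho}\omega$. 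Choose $h\in E_{e\rho}$ with $h\le a$, so that $ah=h$. For any $f\in E_{e\rho}$ the three elements $a,h,f$ all lie in $e\rho$, hence $a\,\rho\,h\,\rho\,f$, and the implication in (3) applied with $x=a$, $y=h$, and $f$ playing the role of the idempotent delivers $af=fa$. Since $E_{e\rho}$ is precisely the idempotent set of $E_{e\rho}\omega$, this shows $E_{e\rho}\omega$ is Clifford.

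The conceptual hinge, and the step most likely to be missed, is the identification $e\rho_t=E_{e\rho}\omega$ supplied by Proposition \ref{kernel}: once this translates the abstract condition $\rho_{tkt}=\varepsilon$ into a concrete structural property of each $\rho$-class, the remaining implications are routine manipulations with the natural partial order and with the kernel description, mirroring the arguments already used in Propositions \ref{kt} and \ref{tk}.
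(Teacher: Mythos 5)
Your proof is correct and takes essentially the same approach as the paper's: both hinge on the identification $e\rho_t=\ker{\sigma_{e\rho}}=E_{e\rho}\omega$ (Proposition \ref{kernel} plus \cite[Lemma III.5.7]{inverse}) and on applying Proposition \ref{kt} to $\rho_t$. Your closing step $(3)\Rightarrow(2)$, argued via the natural partial order, is only a cosmetic rearrangement of the paper's $(3)\Rightarrow(1)$, which verifies the same Clifford property of $\ker{\sigma_{e\rho}}$ using the explicit description of $\sigma_{e\rho}$.
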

\begin{proof}
 $(1) \Rightarrow (2)$. By Proposition \ref{kt} we can deduce that $e\rho_t$ is a Clifford semigroup. But $e\rho_t=\ker{\sigma_{e\rho}}$ by Proposition \ref{kernel} and $\ker{\sigma_{e\rho}}=E_{e\rho}\omega$ by \cite[Lemma \@Roman3.5.7]{inverse}, which implies that $E_{e\rho}\omega$ is a Clifford semigroup and therefore $e\rho$ is an $E\omega$-Clifford semigroup.

 $(2) \Rightarrow (3)$. Let $x$, $y \in S$ and $e \in E_S$ be such that $xy=y$ and $x\,\rho\,y\,\rho\,e$. Then $x(yy^{-1})=(yy^{-1})(yy^{-1})$ and $yy^{-1}\,\rho\,ee^{-1}=e$, which gives that $x \in \ker{\sigma_{e\rho}}=E_{e\rho}\omega$. But $E_{e\rho}\omega$ is a Clifford semigroup and $e$ is an idempotent in $E_{e\rho}\omega$. So we have $xe=ex$.

 $(3) \Rightarrow (1)$. Assume that $e \in E_S$, $x \in \ker{\sigma_{e\rho}}$ and $f \in E_{e\rho}$. Then $x\,\sigma_{e\rho}\,g$ for some $g \in E_{e\rho}$ whence $xh=gh$ for some $h \in E_{e\rho}$. So $x(gh)=x(hg)=(gh)g=gh$. Further, $gh\,\rho\,e\,\rho\,x\,\rho\,f$. By the hypothesis this implies that $xf=fx$ whence $\ker{\sigma_{e\rho}}$ is a Clifford semigroup. By Proposition \ref{kernel}, $e\rho_t=\ker{\sigma_{e\rho}}$ which implies that $e\rho_t$ is a Clifford semigroup. Therefore $\rho_{tkt}=\varepsilon$ now follows from Proposition \ref{kt}.
\end{proof}

Recall that an inverse semigroup $S$ is \emph{$E$-reflexive} if for any $x$, $y \in S$ and $e \in E_S$, $xey \in E_S$ implies $yex \in E_S$. Equivalently, $S$ is $E$-reflexive if and only if every $\eta$-class of $S$ is $E$-unitary. For congruences satisfying $\rho_{ktk}=\varepsilon$, we have the following result.

\begin{proposition}\label{ktk}
 Let $\rho$ be a congruence on an inverse semigroup such that $\rho_{ktk}=\varepsilon$. Then
 \begin{enumerate}[label=(\arabic*)]
  \item for every $e \in E_S$, $e\rho_k$ is an $E$-unitary inverse semigroup;
  \item $\ker{\rho}$ is $E$-reflexive.
 \end{enumerate}
\end{proposition}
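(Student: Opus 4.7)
Part (1) is essentially a direct application of Proposition \ref{tk}. The hypothesis $\rho_{ktk}=\varepsilon$ means exactly that $(\rho_k)_{tk}=\varepsilon$, so taking $\rho_k$ in place of $\rho$ in Proposition \ref{tk} immediately gives that for every $e \in E_S$ the class $e\rho_k$ is an $E$-unitary inverse semigroup.

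For Part (2) I would use the equivalence recalled in the paragraph before the statement: $\ker{\rho}$ is $E$-reflexive if and only if every $\eta$-class of the inverse semigroup $\ker{\rho}$ is $E$-unitary. The strategy is to show that $\rho_k$ restricted to $\ker{\rho}$ is already a semilattice congruence, so the least semilattice congruence $\eta_{\ker{\rho}}$ is contained in it; each $\eta_{\ker{\rho}}$-class then sits inside a $\rho_k$-class, and Part (1) will do the rest. To establish that $\rho_k|_{\ker{\rho}}$ is a semilattice congruence, I would identify the quotient explicitly: the map $x \mapsto x\rho_k$ sends $\ker{\rho}=\ker{\rho_k}$ onto $\{e\rho_k : e \in E_S\}=E_{S/\rho_k}$, because $x \in \ker{\rho_k}$ iff $x\,\rho_k\,e$ for some $e \in E_S$. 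This gives $\ker{\rho}/(\rho_k|_{\ker{\rho}}) \cong E_{S/\rho_k}$, which is a semilattice since $S/\rho_k$ is inverse.

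With $\eta_{\ker{\rho}} \subseteq \rho_k|_{\ker{\rho}}$ in hand, every $\eta_{\ker{\rho}}$-class $C$ lies inside some $\rho_k$-class $e\rho_k$ (with $e \in E_S$). Each such $C$ is an inverse subsemigroup of $\ker{\rho}$ because $\eta$-classes of an inverse semigroup are closed under multiplication (any $\eta$-class is mapped to an idempotent in the semilattice quotient) and under inverses (since $a^{-1}\,\eta\,a$ for all $a$). By Part (1), $e\rho_k$ is $E$-unitary, and any inverse subsemigroup of an $E$-unitary inverse semigroup is $E$-unitary: indeed, if $f \in E_C$ and $x \in C$ satisfy $fx=f$, then $f \in E_{e\rho_k}$ and $x \in e\rho_k$ force $x \in E_{e\rho_k} \cap C = E_C$. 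Hence every $\eta$-class of $\ker{\rho}$ is $E$-unitary, so $\ker{\rho}$ is $E$-reflexive.

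The substantive step is the semilattice-congruence claim in Part (2), which rests on the identification $\ker{\rho_k}/\rho_k = E_{S/\rho_k}$; everything else is then a short packaging argument. Part (1) by itself carries no real difficulty beyond recognising it as an instance of Proposition \ref{tk}.
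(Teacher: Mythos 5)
Your proof is correct and takes essentially the same route as the paper: part (1) is the same direct application of Proposition \ref{tk}, and for part (2) the paper likewise uses the homomorphism $(\rho_k)^{\natural}|_{\ker{\rho}}\colon \ker{\rho}\to E_{S/\rho_k}$ to exhibit $\ker{\rho}=\ker{\rho_k}$ as a semilattice of $E$-unitary inverse semigroups. Your extra steps (that $\eta_{\ker{\rho}}\subseteq\rho_k|_{\ker{\rho}}$, so each $\eta$-class sits inside an $E$-unitary class $e\rho_k$ and inherits $E$-unitarity) simply make explicit the detail the paper compresses into ``it follows that $\ker{\rho}$ is $E$-reflexive.''
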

\begin{proof}
 (1) This is an immediate consequence of Proposition \ref{tk}.

 (2) By Proposition \ref{tk}, $e\rho_k$ is $E$-unitary for every $e \in E_S$, and therefore $\ker{\rho}=\ker{\rho_k}$ is a semilattice of $E$-unitary inverse semigroups, which is induced by the following homomorphism, $$(\rho_k)^{\natural}|_{\ker{\rho}}:~\ker{\rho} \longrightarrow E_{S/\rho_k}.$$ It follows that $\ker{\rho}$ is $E$-reflexive.
\end{proof}

We are now ready for relationships among quotient semigroups by the min network of congruences.

\begin{theorem}\label{rel}
 Let $S$ be an inverse semigroup, and let $n$ be a positive integer. Then
 \begin{enumerate}[label=(\arabic*)]
  \item every idempotent $\alpha_n$-class of $S/\beta_{n+1}$ is a semilattice, and every idempotent $\beta_n$-class of $S/\alpha_{n+1}$ is a group;
  \item $\ker{(\alpha_n)_{S/\alpha_{n+2}}}$ is a Clifford semigroup, and every idempotent $\beta_n$-class of $S/\beta_{n+2}$ is an $E$-unitary inverse semigroup;
  \item $\ker{(\alpha_n)_{S/\beta_{n+3}}}$ is an $E$-reflexive inverse semigroup, and every idempotent $\beta_n$-class of $S/\alpha_{n+3}$ is an $E\omega$-Clifford semigroup.
 \end{enumerate}
\end{theorem}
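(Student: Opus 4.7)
The plan is to translate each structural claim into a congruence equation of the form $\rho_w = \varepsilon_{S/\gamma}$, where $w$ is a short word in $\{k,t\}$, and then verify that equation on the quotient via the combined use of Propositions \ref{ab} and \ref{min}. The dictionary connecting semigroup-theoretic properties of idempotent $\rho$-classes (or of $\ker\rho$) to such congruence identities has already been built up: Lemma \ref{t} for $\rho_t = \varepsilon$ (every idempotent class is a group), Proposition \ref{k} for $\rho_k = \varepsilon$ (every idempotent class is a semilattice), Proposition \ref{kt} for $\rho_{kt} = \varepsilon$ ($\ker\rho$ Clifford), Proposition \ref{tk} for $\rho_{tk} = \varepsilon$ (every idempotent class $E$-unitary), Proposition \ref{tkt} for $\rho_{tkt} = \varepsilon$ (every idempotent class $E\omega$-Clifford), and Proposition \ref{ktk} for $\rho_{ktk} = \varepsilon$ ($\ker\rho$ $E$-reflexive). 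So the task reduces to producing the six congruence identities.

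Each one is a short telescoping calculation. For (1), since $\beta_{n+1} = (\alpha_n)_k \subseteq \alpha_n$, Proposition \ref{ab} identifies $(\alpha_n)_{S/\beta_{n+1}}$ with $\alpha_n/\beta_{n+1}$, and the ``in particular'' clause of Proposition \ref{min} gives $(\alpha_n/\beta_{n+1})_k = \beta_{n+1}/\beta_{n+1} = \varepsilon_{S/\beta_{n+1}}$; the group half is dual via $\alpha_{n+1} = (\beta_n)_t$. For (2), two applications of Proposition \ref{min} along the chain $\alpha_{n+2} \subseteq \beta_{n+1} \subseteq \alpha_n$ yield $((\alpha_n)_{S/\alpha_{n+2}})_{kt} = \alpha_{n+2}/\alpha_{n+2} = \varepsilon$, and the $E$-unitary half is dual along $\beta_{n+2} \subseteq \alpha_{n+1} \subseteq \beta_n$. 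Part (3) requires three applications along $\beta_{n+3} \subseteq \alpha_{n+2} \subseteq \beta_{n+1} \subseteq \alpha_n$ and its dual, producing $\rho_{ktk} = \varepsilon_{S/\beta_{n+3}}$ and $\rho_{tkt} = \varepsilon_{S/\alpha_{n+3}}$ for the appropriate $\rho$; Propositions \ref{ktk} and \ref{tkt} then conclude.

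The hard part will be bookkeeping. At each step one has to check the containment required by the ``in particular'' clauses of Propositions \ref{ab} and \ref{min}, but these all follow from $\rho_t, \rho_k \subseteq \rho$ applied to the defining identities $\alpha_{n+1} = (\beta_n)_t$ and $\beta_{n+1} = (\alpha_n)_k$, so that successive elements of the min network are nested exactly as the computations demand. Once these inclusions are verified the iterated quotients telescope as sketched and no further substantive argument is needed.
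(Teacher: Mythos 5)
Your proposal is correct and follows essentially the same route as the paper: the paper's proof consists precisely of the six telescoping identities $((\alpha_n)_{S/\beta_{n+1}})_k=\varepsilon$, $((\beta_n)_{S/\alpha_{n+1}})_t=\varepsilon$, $((\alpha_n)_{S/\alpha_{n+2}})_{kt}=\varepsilon$, $((\beta_n)_{S/\beta_{n+2}})_{tk}=\varepsilon$, $((\alpha_n)_{S/\beta_{n+3}})_{ktk}=\varepsilon$, $((\beta_n)_{S/\alpha_{n+3}})_{tkt}=\varepsilon$, obtained by combining Propositions \ref{ab} and \ref{min} exactly as you describe, followed by the same appeal to Lemma \ref{t} and Propositions \ref{k}, \ref{kt}, \ref{tk}, \ref{ktk}, \ref{tkt}. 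The nesting conditions you flag as ``bookkeeping'' are indeed automatic because at each stage the congruence being quotiented out equals the relevant $\theta_{x_1\cdots x_m}$, so the required inclusions hold (with equality at the last step).
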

\begin{proof}
 By Proposition \ref{ab} and Proposition \ref{min} we have
 \begin{center}
  $((\alpha_n)_{S/\beta_{n+1}})_k=(\alpha_n/\beta_{n+1})_k=(\alpha_n)_k/\beta_{n+1}=\beta_{n+1}/\beta_{n+1}=\varepsilon_{S/\beta_{n+1}}$,\\
  $((\beta_n)_{S/\alpha_{n+1}})_t=(\beta_n/\alpha_{n+1})_t=(\beta_n)_t/\alpha_{n+1}=\alpha_{n+1}/\alpha_{n+1}=\varepsilon_{S/\alpha_{n+1}}$;\\
  $((\alpha_n)_{S/\alpha_{n+2}})_{kt}=(\alpha_n/\alpha_{n+2})_{kt}=(\alpha_n)_{kt}/\alpha_{n+2}=\alpha_{n+2}/\alpha_{n+2}=\varepsilon_{S/\alpha_{n+2}}$,\\
  $((\beta_n)_{S/\beta_{n+2}})_{tk}=(\beta_n/\beta_{n+2})_{tk}=(\beta_n)_{tk}/\beta_{n+2}=\beta_{n+2}/\beta_{n+2}=\varepsilon_{S/\beta_{n+2}}$;\\
  $((\alpha_n)_{S/\beta_{n+3}})_{ktk}=(\alpha_n/\beta_{n+3})_{ktk}=(\alpha_n)_{ktk}/\beta_{n+3}=\beta_{n+3}/\beta_{n+3}=\varepsilon_{S/\beta_{n+3}}$,\\
  $((\beta_n)_{S/\alpha_{n+3}})_{tkt}=(\beta_n/\alpha_{n+3})_{tkt}=(\beta_n)_{tkt}/\alpha_{n+3}=\alpha_{n+3}/\alpha_{n+3}=\varepsilon_{S/\alpha_{n+3}}$.
 \end{center}
 The desired results then follow immediately from Propositions \ref{k}, \ref{t}, \ref{kt}, \ref{tk}, \ref{ktk}, and \ref{tkt}.
\end{proof}

\begin{remark}
 It follows directly from (2) of Theorem \ref{rel} that $\alpha_{n+2}$ is a $\ker{\alpha_n}$-is-Clifford congruence, and that $\beta_{n+2}$ is a $\beta_n$-is-over-$E$-unitary congruence, which are due to Theorem 2.9 and Theorem 2.13 of \cite{fw2019}.
\end{remark}

We next proceed to explore further relationships among quotient semigroups by the min network of congruences. Again we need some preparation.

\begin{lemma}\label{generate}
 Let $\mathsf{R}$ be a relation on an arbitrary semigroup $X$, $Y$ be a subsemigroup of $X$, and $(\mathsf{R}|_Y)^*$ be the congruence on $Y$ generated by $\mathsf{R}|_Y$. Then $(\mathsf{R}|_Y)^* \subseteq \mathsf{R}^*|_Y$.
\end{lemma}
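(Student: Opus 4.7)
The plan is to show that $\mathsf{R}^*|_Y$ is itself a congruence on $Y$ containing $\mathsf{R}|_Y$; the required inclusion then follows immediately from the minimality of $(\mathsf{R}|_Y)^*$ among congruences on $Y$ containing $\mathsf{R}|_Y$. The argument is routine and splits naturally into two parts.

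First I would verify that $\mathsf{R}^*|_Y$ contains $\mathsf{R}|_Y$. This is immediate: $\mathsf{R} \subseteq \mathsf{R}^*$ gives $\mathsf{R}|_Y \subseteq \mathsf{R}^*|_Y$ upon restricting both sides to $Y \times Y$.

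Next I would check that $\mathsf{R}^*|_Y$ is a congruence on $Y$. Reflexivity, symmetry and transitivity on $Y$ all descend from the corresponding properties of $\mathsf{R}^*$ on $X$, using the elementary observation that if the endpoints of a pair lie in $Y$ then the restricted relation records the pair. For compatibility, suppose $(a,b) \in \mathsf{R}^*|_Y$ and let $c \in Y$. Since $\mathsf{R}^*$ is a congruence on $X$ we have $(ac,bc),(ca,cb) \in \mathsf{R}^*$; since $Y$ is a subsemigroup of $X$, all four products $ac,bc,ca,cb$ lie in $Y$, and so $(ac,bc),(ca,cb) \in \mathsf{R}^*|_Y$. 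Thus $\mathsf{R}^*|_Y \in \mathcal{C}(Y)$.

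Having established that $\mathsf{R}^*|_Y$ is a congruence on $Y$ containing $\mathsf{R}|_Y$, the defining minimality of $(\mathsf{R}|_Y)^*$ yields $(\mathsf{R}|_Y)^* \subseteq \mathsf{R}^*|_Y$, as required. There is no genuine obstacle here; the only point deserving a moment's attention is the use of the hypothesis that $Y$ is a subsemigroup (not merely a subset), which is precisely what is needed to conclude $ac,bc,ca,cb \in Y$ in the compatibility step.
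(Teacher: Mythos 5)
Your proof is correct, but it takes a genuinely different route from the paper. You argue top-down: $\mathsf{R}^*|_Y$ is an equivalence on $Y$ (reflexivity, symmetry and transitivity descend from $\mathsf{R}^*$ because the relevant elements all lie in $Y$), it is compatible because $Y$ is closed under products, and it contains $\mathsf{R}|_Y$; minimality of $(\mathsf{R}|_Y)^*$ then finishes the job. The paper instead works bottom-up through the explicit three-stage construction of a generated congruence, $\mathsf{R}^* = \mathsf{R}^{sc\infty}$ (reflexive--symmetric closure, then compatible closure, then transitive closure), verifying at each stage that the closure of the restriction is contained in the restriction of the closure, and chaining these inclusions together. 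Your argument is shorter and cleaner, since it sidesteps the explicit description of generated congruences entirely and uses only the universal property; the paper's computation has the minor advantage of exhibiting exactly where the inclusion can become strict (the compatible-closure stage, where $Y^1 \subseteq X^1$ gives an inclusion rather than an equality), but nothing later in the paper relies on that finer information, so your proof would serve equally well. You are also right to flag that the subsemigroup hypothesis is used precisely once, in the compatibility step.
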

\begin{proof}
 Let $\mathsf{R}^s$, $\mathsf{R}^c$, and $\mathsf{R}^\infty$ be the smallest reflexive and symmetric, smallest left and right compatible, and smallest transitive relation on $X$ containing $\mathsf{R}$ respectively. And let $(\mathsf{R}|_Y)^s$, $(\mathsf{R}|_Y)^c$, and $(\mathsf{R}|_Y)^\infty$ be the smallest reflexive and symmetric, smallest left and right compatible, and smallest transitive relation on $Y$ containing $\mathsf{R}|_Y$ respectively. Then we have $$(\mathsf{R}|_Y)^s \subseteq \mathsf{R}^s|_Y, \quad (\mathsf{R}|_Y)^c \subseteq \mathsf{R}^c|_Y, \quad (\mathsf{R}|_Y)^\infty \subseteq \mathsf{R}^\infty|_Y,$$ since
 \begin{eqnarray*}
  (\mathsf{R}|_Y)^s&=&\mathsf{R}|_Y \cup (\mathsf{R}|_Y)^{-1} \cup \varepsilon_Y=(\mathsf{R} \cup \mathsf{R}^{-1} \cup \varepsilon_X)|_Y=\mathsf{R}^s|_Y,\\
  (\mathsf{R}|_Y)^c&=&\{(xay,xby) \in Y \times Y\,|\,x,y \in Y^1,~(a,b) \in \mathsf{R}|_Y\}\\
  &\subseteq& \{(xay,xby) \in Y \times Y\,|\,x,y \in X^1,~ (a,b) \in \mathsf{R}\}\\
  &=&\{(xay,xby) \in X \times X\,|\,x,y \in X^1, ~(a,b) \in \mathsf{R}\}|_Y\\
  &=&\mathsf{R}^c|_Y,\\
  (\mathsf{R}|_Y)^\infty&=&\bigcup_{n \in \mathbb{Z}^+}(\mathsf{R}|_Y)^n \subseteq \bigcup_{n \in \mathbb{Z}^+}\left(\mathsf{R}^n|_Y\right)=\left(\bigcup_{n \in \mathbb{Z}^+} \mathsf{R}^n\right)\Bigg|_Y=\mathsf{R}^\infty|_Y.
 \end{eqnarray*}
 Consequently,
 $$(\mathsf{R}|_Y)^*=(\mathsf{R}|_Y)^{sc\infty} \subseteq (\mathsf{R}^s|_Y)^{c\infty} \subseteq (\mathsf{R}^{sc}|_Y)^\infty \subseteq \mathsf{R}^{sc\infty}|_Y=\mathsf{R}^*|_Y.$$
\end{proof}

\begin{lemma}\label{restriction}
 Let $\rho$ be a congruence on an inverse semigroup $S$ and $e \in E_S$. Then $$\mathcal{L}_S|_{e\rho}=\mathcal{L}_{e\rho}, \qquad \mathcal{F}_S|_{e\rho}=\mathcal{F}_{e\rho}, \qquad \rho|_{e\rho}=\omega_{e\rho}.$$
\end{lemma}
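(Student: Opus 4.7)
The plan is to exploit the fact that $e\rho$ is an inverse subsemigroup of $S$ whose idempotents are exactly $E_S\cap e\rho$, and whose inverse operation agrees with that of $S$. Once this is in place, each of the three equalities is a short verification based on standard inverse-semigroup characterisations.

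First I would check that $e\rho$ is an inverse subsemigroup of $S$. If $a\in e\rho$, then $a\,\rho\,e$, so $a^{-1}\,\rho\,e^{-1}=e$ and hence $a^{-1}\in e\rho$; and products of $\rho$-related elements remain in the same class, so $e\rho$ is closed under multiplication and inversion. In particular the unique inverse in $e\rho$ of any $a\in e\rho$ coincides with $a^{-1}$, and $E_{e\rho}=E_S\cap e\rho$.

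For the first equality, I would use the well-known fact that in any inverse semigroup $a\,\mathcal{L}\,b\iff a^{-1}a=b^{-1}b$. Applied to $S$ and to $e\rho$, this gives, for $a,b\in e\rho$,
\[a\,(\mathcal{L}_S|_{e\rho})\,b\iff a^{-1}a=b^{-1}b\iff a\,\mathcal{L}_{e\rho}\,b,\]
since inverses are the same in $S$ and in $e\rho$. For the second equality, using $E_{e\rho}=E_S\cap e\rho$ and the fact that $a^{-1}b\in e\rho$ whenever $a,b\in e\rho$, we have
\[a\,(\mathcal{F}_S|_{e\rho})\,b\iff a^{-1}b\in E_S\iff a^{-1}b\in E_S\cap e\rho=E_{e\rho}\iff a\,\mathcal{F}_{e\rho}\,b.\]
For the third equality, any two elements of $e\rho$ are $\rho$-related to $e$ and hence to each other, so $\rho|_{e\rho}=(e\rho)\times(e\rho)=\omega_{e\rho}$.

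There is no real obstacle here; the only thing to be careful about is making sure that the inverse operation on $e\rho$ coincides with the restriction of the inverse operation on $S$ (which is immediate by uniqueness of inverses) and that $E_{e\rho}=E_S\cap e\rho$, so that the various characterisations transfer cleanly between $S$ and $e\rho$.
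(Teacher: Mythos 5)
Your proof is correct and follows essentially the same route as the paper: both arguments reduce each equality to the facts that $e\rho$ is an inverse subsemigroup with $E_{e\rho}=E_S\cap e\rho$ and inverses inherited from $S$, so that the standard characterisations of $\mathcal{L}$ and $\mathcal{F}$ transfer directly, and that all elements of $e\rho$ are mutually $\rho$-related. The only cosmetic difference is that for $\mathcal{L}$ you invoke the criterion $a^{-1}a=b^{-1}b$ in both $S$ and $e\rho$, whereas the paper exhibits the witnessing elements inside $e\rho$ directly; these amount to the same observation.
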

\begin{proof}
 Assume that $a$, $b \in e\rho$. If $a\,\mathcal{L}_S\,b$, then $a^{-1}$, $b^{-1}$ and $a^{-1}a=b^{-1}b$ are all $\rho$-related to $e$, which implies that $a\,\mathcal{L}_{e\rho}\,b$ whence $\mathcal{L}_S|_{e\rho} \subseteq \mathcal{L}_{e\rho}$. The reverse inclusion $\mathcal{L}_{e\rho} \subseteq \mathcal{L}_S|_{e\rho}$ is obvious. Therefore $\mathcal{L}_S|_{e\rho}=\mathcal{L}_{e\rho}$.

 If $a\,\mathcal{F}_S\,b$, then $a^{-1}b \in E_S$. Now $a^{-1} \in e\rho$. Thus $a^{-1}b \in e\rho$ and $a^{-1}b \in E_{e\rho}$ whence $a\,\mathcal{F}_{e\rho}\,b$, which implies that $\mathcal{F}_S|_{e\rho} \subseteq \mathcal{F}_{e\rho}$. Clearly, $\mathcal{F}_{e\rho} \subseteq \mathcal{F}_S|_{e\rho}$. Hence $\mathcal{F}_S|_{e\rho}=\mathcal{F}_{e\rho}$.

 Finally $\rho|_{e\rho}=\omega_{e\rho}$ since any two elements in $e\rho$ are $\rho$-related.
\end{proof}

\begin{lemma}\label{omega}
 Let $\rho$ be a congruence on an inverse semigroup $S$ and let $n$ be a positive integer. Then for any $x_1$, $x_2$, $\cdots$, $x_n \in \{t,k\}$ and $e \in E_S$, $$(\omega_{e\rho})_{x_1 x_2\,\cdots\,x_n} \subseteq \rho_{x_1 x_2\,\cdots\,x_n}|_{e\rho}.$$
\end{lemma}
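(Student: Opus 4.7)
The plan is to proceed by induction on $n$, using Result \ref{tkgen} to unpack each application of $(\cdot)_t$ and $(\cdot)_k$ as a $*$-closure of an intersection with $\mathcal{F}$ or $\mathcal{L}$, and then combining Lemma \ref{restriction} (which identifies $\mathcal{F}_S|_{e\rho}$ with $\mathcal{F}_{e\rho}$ and $\mathcal{L}_S|_{e\rho}$ with $\mathcal{L}_{e\rho}$) with Lemma \ref{generate} (which says that forming the $*$-closure on a subsemigroup gives something contained in the restriction of the $*$-closure on the ambient semigroup). Writing $A = e\rho$ for brevity, note that any two elements of $A$ are $\rho$-related, so $\mathcal{F}_S|_A \subseteq (\rho \cap \mathcal{F}_S)|_A$ and $\mathcal{L}_S|_A \subseteq (\rho \cap \mathcal{L}_S)|_A$.

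For the base case $n=1$, suppose $x_1 = t$. By Result \ref{tkgen}, $(\omega_A)_t = (\omega_A \cap \mathcal{F}_A)^*_A = (\mathcal{F}_A)^*_A$; by Lemma \ref{restriction} this equals $(\mathcal{F}_S|_A)^*_A$, which (by the observation above) is contained in $((\rho \cap \mathcal{F}_S)|_A)^*_A$. Applying Lemma \ref{generate} and Result \ref{tkgen} once more yields
\[ ((\rho \cap \mathcal{F}_S)|_A)^*_A \subseteq ((\rho \cap \mathcal{F}_S)^*_S)|_A = \rho_t|_A. \]
The case $x_1 = k$ is handled identically, replacing $\mathcal{F}$ by $\mathcal{L}$.

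For the inductive step, set $\sigma = (\omega_A)_{x_1 \cdots x_n}$ and $\tau = \rho_{x_1 \cdots x_n}$, and assume $\sigma \subseteq \tau|_A$. If $x_{n+1} = t$, Result \ref{tkgen} (applied in $A$) gives $\sigma_t = (\sigma \cap \mathcal{F}_A)^*_A$. Using Lemma \ref{restriction} and the inductive hypothesis,
\[ \sigma \cap \mathcal{F}_A \subseteq \tau|_A \cap \mathcal{F}_S|_A = (\tau \cap \mathcal{F}_S)|_A, \]
so by Lemma \ref{generate} and Result \ref{tkgen},
\[ \sigma_t \subseteq ((\tau \cap \mathcal{F}_S)|_A)^*_A \subseteq ((\tau \cap \mathcal{F}_S)^*_S)|_A = \tau_t|_A = \rho_{x_1 \cdots x_n t}|_A. \]
The case $x_{n+1} = k$ is parallel, with $\mathcal{L}$ in place of $\mathcal{F}$, completing the induction.

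There is no real obstacle: the argument is essentially mechanical once the three ingredients (the generators for $\rho_t, \rho_k$, the behaviour of restriction on $\mathcal{F}$ and $\mathcal{L}$, and the compatibility of $*$-closure with restriction) are in place. The only mildly delicate point worth stating explicitly is that the induction hypothesis is a containment rather than an equality; this is nevertheless sufficient because both $\cap\, \mathcal{F}_A$ (resp. $\cap\, \mathcal{L}_A$) and $(\,\cdot\,)^*_A$ are monotone, which is precisely what drives the inductive step.
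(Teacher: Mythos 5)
Your proof is correct and follows essentially the same route as the paper's: the paper writes the argument as a single chain of inclusions (with a ``$\vdots$'' standing in for the iteration), using exactly your three ingredients --- Result \ref{tkgen} to express each $(\cdot)_t$ and $(\cdot)_k$ as a $*$-closure of an intersection with $\mathcal{F}$ or $\mathcal{L}$, Lemma \ref{restriction} to identify these relations on $e\rho$ with their restrictions from $S$, and Lemma \ref{generate} together with monotonicity of the closure operators to pass to $\rho_{x_1\cdots x_n}|_{e\rho}$. Recasting that chain as an explicit induction on $n$ is only a presentational difference.
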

\begin{proof}
 Let $$\mathcal{A}_i=\begin{cases}\mathcal{F}, &x_i=t,\\ \mathcal{L}, &x_i=k,\end{cases}$$ where $i=1$, $\cdots$, $n$. It follows from Result \ref{tkgen}, Lemma \ref{generate} and Lemma \ref{restriction} that
 \begin{eqnarray*}
  (\omega_{e\rho})_{x_1x_2\,\cdots\,x_n}&=&(\cdots((\rho|_{e\rho} \bigcap (\mathcal{A}_1)_{e\rho})^*\bigcap (\mathcal{A}_2)_{e\rho})^* \bigcap \cdots \bigcap (\mathcal{A}_n)_{e\rho})^*\\
  &=&(\cdots(((\rho \bigcap \mathcal{A}_1)|_{e\rho})^* \bigcap \mathcal{A}_2|_{e\rho})^* \bigcap \cdots \bigcap \mathcal{A}_n|_{e\rho})^*\\
  &\subseteq& (\cdots ((\rho \bigcap \mathcal{A}_1)^*|_{e\rho} \bigcap \mathcal{A}_2|_{e\rho})^* \bigcap \cdots \bigcap \mathcal{A}_n|_{e\rho})^*\\
  &=&(\cdots (((\rho \bigcap \mathcal{A}_1)^* \bigcap \mathcal{A}_2)|_{e\rho})^* \bigcap \cdots \bigcap \mathcal{A}_n|_{e\rho})^*\\
  &\vdots&\\
  &\subseteq& ((\cdots ((\rho \bigcap \mathcal{A}_1)^* \bigcap \mathcal{A}_2)^* \bigcap \cdots \bigcap \mathcal{A}_n)|_{e\rho})^*\\
  &\subseteq& (\cdots((\rho \bigcap \mathcal{A}_1)^* \bigcap \mathcal{A}_2)^* \bigcap \cdots \bigcap \mathcal{A}_n)^*|_{e\rho}\\
  &=&\rho_{x_1 x_2 \cdots\,x_n}|_{e\rho},
 \end{eqnarray*}
 as required.
\end{proof}

\begin{remark}
 Lemma \ref{omega} has a more general form: for any $x_1$, $\cdots$, $x_m$, $\cdots$, $x_n \in \{t,k\}$ and $e \in E_S$, $$(\omega_{e\rho_{x_1x_2\cdots x_m}})_{x_{m+1}\cdots x_n} \subseteq \rho_{x_1 \cdots x_n}|_{e\rho_{x_1 \cdots x_m}}.$$
\end{remark}

The second main result of this section may now be established.

\begin{theorem}\label{main}
 Let $n$ be a non-negative integer. The following statements are valid in any inverse semigroup $S$.
 \begin{enumerate}[label=(\arabic*)]
  \item Every $\eta$-class of $S/\beta_{2n+3}$ is a $\beta_{2n}$-is-over-$E$-unitary semigroup;
  \item every $\eta$-class of $S/\alpha_{2(n+2)}$ is a $\ker{\alpha_{2n+1}}$-is-Clifford semigroup;
  \item $(E_{S/\beta_{2(n+2)}})\omega$ is a $\beta_{2n+1}$-is-over-$E$-unitary semigroup;
  \item $(E_{S/\alpha_{2n+3}})\omega$ is a $\ker{\alpha_{2n}}$-is-Clifford semigroup.
 \end{enumerate}
\end{theorem}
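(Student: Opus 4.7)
The plan is to handle all four items uniformly by applying Lemma \ref{omega} to a suitable idempotent $\rho$-class of the quotient semigroup $T$ appearing in each item. For (1) and (2), the subsemigroup of interest is already an $\eta$-class $U = e\eta_T$ for some $e \in E_T$, and hence is an idempotent $\eta_T$-class. For (3) and (4), the subsemigroup $V = (E_T)\omega$ coincides with the unique idempotent $\sigma_T$-class $e\sigma_T$ (any $e \in E_T$), because $\sigma_T = (\alpha_1)_T$ is a group congruence with $\ker{\sigma_T} = (E_T)\omega$. Thus in every item the subsemigroup of interest has the form $e\rho$ with $e \in E_T$ and $\rho \in \{(\beta_1)_T, (\alpha_1)_T\}$, which is precisely the setting required by Lemma \ref{omega}.

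For each item the target quasivariety membership translates, via Propositions \ref{tk} and \ref{kt}, into an equation of the form $(\omega_{e\rho})_w = \varepsilon_{e\rho}$ for a specific word $w$ in $\{t,k\}$. A short computation gives
\[ w_1 = (tk)^{n+1},\quad w_2 = t(kt)^{n+1},\quad w_3 = (kt)^{n+1}k,\quad w_4 = (kt)^{n+1} \]
for items (1)--(4) respectively. By Lemma \ref{omega}, $(\omega_{e\rho})_{w_i} \subseteq \rho_{w_i}|_{e\rho}$, so it suffices to show that $\rho_{w_i} = \varepsilon_T$ in $T$ for each $i$.

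To evaluate $\rho_{w_i}$ I would use Proposition \ref{ab} to rewrite $\rho$ as $\beta_1/\kappa$ or $\alpha_1/\kappa$, where $\kappa$ is the congruence defining $T$, and then apply Proposition \ref{min} once per letter of $w_i$. A direct check confirms that $(\beta_1)_{w_1} = (\beta_1)_{w_2} = (\alpha_1)_{w_3} = (\alpha_1)_{w_4} = \kappa$ as congruences on $S$; for instance in (1), $(\beta_1)_{(tk)^{n+1}} = \omega_{k(tk)^{n+1}} = \omega_{(kt)^{n+1}k} = \beta_{2n+3} = \kappa$. Moreover, at every intermediate stage the congruence produced lies in the same chain of (\ref{eqn:sequence2}) as $\kappa$ and has index no larger than that of $\kappa$; so each join $\kappa \vee (\text{intermediate})$ collapses to the intermediate congruence, and the specialised form of Proposition \ref{min}, namely $(\gamma/\kappa)_x = \gamma_x/\kappa$ whenever $\kappa \subseteq \gamma_x$, applies at every step. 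After $|w_i|$ iterations we reach $\rho_{w_i} = \kappa/\kappa = \varepsilon_T$.

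The main obstacle is the parity bookkeeping: the word $w_i$, the kernel $\kappa$, and the position of $\kappa$ in one of the two chains of (\ref{eqn:sequence2}) must be matched correctly, and since each of the four items involves slightly different parities, the verification has to be redone in each case. Once this is settled, items (1) and (3) follow from Proposition \ref{tk}, and items (2) and (4) from Proposition \ref{kt}.
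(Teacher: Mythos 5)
Your argument is correct and is essentially the paper's own proof: both reduce each item via Lemma \ref{omega} to showing $\rho_{w_i}=\varepsilon_T$ for the appropriate word $w_i$, identify $\rho_{w_i}$ with $\kappa/\kappa=\varepsilon_T$ by means of Propositions \ref{ab} and \ref{min}, and conclude with Propositions \ref{tk}, \ref{kt} and \ref{ktk}. Your words $w_i$ agree with those used in the paper (indeed your $w_3=(kt)^{n+1}k$ corrects the exponent misprint $(kt)^{2(n+1)}k$ appearing in the paper's proof of item (3)).
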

\begin{proof}
 (1) Denote $S/\beta_{2n+3}$ by $Z$. Let $e \in E_Z$. From Lemma \ref{omega} it follows that $$(\beta_{2(n+1)})_{e\eta_Z}=(\omega_{e\eta_Z})_{(tk)^{n+1}} \subseteq (\eta_Z)_{(tk)^{n+1}}|_{e\eta_Z}=(\beta_{2n+3})_Z|_{e\eta_Z}.$$ But Proposition \ref{ab} gives $$(\beta_{2n+3})_Z=\beta_{2n+3}/\beta_{2n+3}=\varepsilon_{S/\beta_{2n+3}}=\varepsilon_Z.$$ As a result we see that $$(\beta_{2(n+1)})_{e\eta_Z} \subseteq \varepsilon_Z|_{e\eta_Z},$$ and hence $(\beta_{2(n+1)})_{e\eta_Z}=\varepsilon_Z|_{e\eta_Z}$. By Proposition \ref{ktk}, $(\beta_{2n})_{e\eta_Z}$ is over $E$-unitary inverse semigroups.

 (2) Denote $S/\alpha_{2(n+2)}$ by $Z$. By Proposition \ref{kt}, to show that $\ker{(\alpha_{2n+1})_{e\eta_Z}}$ is a Clifford semigroup, it is sufficient to prove that $((\alpha_{2n+1})_{e\eta_Z})_{kt}=\varepsilon_{e\eta_Z}$, that is, $(\alpha_{2n+3})_{e\eta_Z}=\varepsilon_{e\eta_Z}$.

 From Lemma \ref{omega} we find that $$(\alpha_{2n+3})_{e\eta_Z}=(\omega_{e\eta_Z})_{(tk)^{n+1}t} \subseteq (\eta_Z)_{(tk)^{n+1}t}|_{e\eta_Z}=(\alpha_{2(n+2)})_Z|_{e\eta_Z}.$$ By Proposition \ref{ab}, $$(\alpha_{2(n+2)})_Z=\alpha_{2(n+2)}/\alpha_{2(n+2)}=\varepsilon_{S/\alpha_{2(n+2)}}=\varepsilon_Z,$$ which implies that $(\alpha_{2n+3})_{e\eta_Z} \subseteq \varepsilon_Z|_{e\eta_Z}=\varepsilon_{e\eta_Z}$ and therefore the equality $(\alpha_{2n+3})_{e\eta_Z}=\varepsilon_{e\eta_Z}$ takes place.

 (3) Denote $S/\beta_{2(n+2)}$ by $Z$. Notice that $E_Z\omega=\ker{\sigma_Z}=e\sigma_Z$ for every $e \in E_Z$. Let $f \in E_Z$. In view of Proposition \ref{ktk}, to show that $f(\beta_{2n+1})_{E_Z\omega}$ is an $E$-unitary inverse semigroup, it suffices to prove that $((\alpha_{2n})_{e\sigma_Z})_{ktk}=\varepsilon_{e\sigma_Z}$, that is, $(\beta_{2n+3})_{e\sigma_Z}=\varepsilon_{e\sigma_Z}$.

 We infer from Lemma \ref{omega} that $$(\beta_{2n+3})_{e\sigma_Z}=(\omega_{e\sigma_Z})_{(kt)^{2(n+1)}k} \subseteq (\sigma_Z)_{(kt)^{2(n+1)}k}|_{e\sigma_Z}=(\beta_{2(n+2)})_Z|_{e\sigma_Z}.$$ Furthermore, Proposition \ref{ab} gives $$(\beta_{2(n+2)})_Z=\beta_{2(n+2)}/\beta_{2(n+2)}=\varepsilon_{S/\beta_{2(n+2)}}=\varepsilon_Z,$$ and therefore also $(\beta_{2n+3})_{e\sigma_Z} \subseteq \varepsilon_Z|_{e\sigma_Z}=\varepsilon_{e\sigma_Z}$.

 (4) Denote $S/\alpha_{2n+3}$ by $Z$. Note that for every $e \in E_Z$, $E_Z\omega=\ker{\sigma_Z}=e\sigma_Z$. By Proposition \ref{kt}, in order to prove that $\ker{(\alpha_{2n})_{E_Z\omega}}=\ker{(\alpha_{2n})_{e\sigma_Z}}$ is a Clifford semigroup, it is sufficient to show that $((\alpha_{2n})_{e\sigma_Z})_{kt}=\varepsilon_{e\sigma_Z}$, that is, $(\alpha_{2(n+1)})_{e\sigma_Z}=\varepsilon_{e\sigma_Z}$.

 From Lemma \ref{omega} we derive that $$(\alpha_{2(n+1)})_{e\sigma_Z}=(\omega_{e\sigma_Z})_{(kt)^{n+1}} \subseteq (\sigma_Z)_{(kt)^{n+1}}|_{e\sigma_Z}=(\alpha_{2n+3})_Z|_{e\sigma_Z}.$$ However, Proposition \ref{ab} implies that $$(\alpha_{2n+3})_Z=\alpha_{2n+3}/\alpha_{2n+3}=\varepsilon_{S/\alpha_{2n+3}}=\varepsilon_Z,$$ whence $(\alpha_{2(n+1)})_{e\sigma_Z} \subseteq \varepsilon_Z|_{e\sigma_Z}$ and so the proof is complete.
\end{proof}

\begin{remark}
 Theorem \ref{main} determines exactly the relationships among the quasivarieties in which the quotient semigroups lie. Take $\ker{\alpha_{2(n+1)}}$-is-Clifford semigroups for example. If $S$ is a $\ker{\alpha_{2(n+1)}}$-is-Clifford semigroup, then $\alpha_{2(n+2)}$, the least $\ker{\alpha_{2(n+1)}}$-is-Clifford congruence on $S$, is the identity relation. By (2) of Theorem \ref{main}, every $\eta$-class of $S$ ($=S/\varepsilon$) is a $\ker{\alpha_{2n+1}}$-is-Clifford semigroup. That is to say, every $\eta$-class of a $\ker{\alpha_{2(n+1)}}$-is-Clifford semigroup is a $\ker{\alpha_{2n+1}}$-is-Clifford semigroup. So Theorem \ref{main} is exactly the response to Problem 5 in \cite{wf2011}. Every $\eta$-class of a Clifford semigroup is a group, every $\eta$-class of an $E$-reflexive inverse semigroup is $E$-unitary, every $\eta$-class of a $\ker{\nu}$-is-Clifford semigroup is a $\ker{\sigma}$-is-Clifford semigroup, $\cdots$, and the closure of the set of idempotents, or $E\omega$, of an $E$-unitary inverse semigroup is a semilattice, $E\omega$ of a $\ker{\sigma}$-is-Clifford semigroup is a Clifford semigroup, $E\omega$ of a $\pi$-is-over-$E$-unitary semigroup is $E$-reflexive, $\cdots$. These patterns continue indefinitely.
\end{remark}

\noindent\textbf{Acknowledgements} The authors are grateful to the referee for his/her very careful reading of their manuscript, and the very detailed and helpful comments provided. They would also like to thank Professor Victoria Gould for the help with English and expressions, which has resulted in a much improved article. This work is supported by the National Natural Science Foundation of China (Grant No.: 11901088, 11871150) and China Scholarship Council.

\end{document}